\def\N{{\mathbb N}}
\def\Z{{\mathbb Z}}
\def\R{{\mathbb R}}
\def\C{{\mathbb C}}
\def\Eb{{\mathbb E}}
\def\Ws{{\mathscr W}}
\def\Bin{\textup{Bin}}
\def\LR{\mathsf{LR}}
\def\TG{\mathbb{T}}
\def\llra{\longleftrightarrow}
\def\Quad{\mathcal Q}
\def \P {{\bf P}}
\def\md{\mid}
\def\Bb#1#2{{\def\md{\bigm| }#1\bigl[#2\bigr]}}
\def\BB#1#2{{\def\md{\Bigm| }#1\Bigl[#2\Bigr]}}
\def\Bs#1#2{{\def\md{\mid}#1[#2]}}
\def\Pb{\Bb\P}
\def\Eb{\Bb\E}
\def\Pso#1{\Bs{\P_{#1}}}
\def\Pbo#1{\Bb{\P_{#1}}}
\def\PBo#1{\BB{\P_{#1}}}
\def\Ebo#1{\Bb{\E_{#1}}}
\def\EBo#1{\BB{\E_{#1}}}
\def\be{\begin{equation}}
\def\ee{\end{equation}}
\def\bea{\begin{equation*}}
\def\eea{\end{equation*}}
\def\eps{\varepsilon}
\def\Pr{{\mathbb P}}
\DeclareMathOperator{\E}{{\mathbb E}}
\DeclareMathOperator{\Var}{Var}
\DeclareMathOperator{\Inf}{Inf}
\newtheorem{thm}{Theorem}
\newtheorem{lma}[thm]{Lemma}
\newtheorem{prop}[thm]{Proposition}
\newtheorem*{df}{Definition}
\newtheorem{conj}[thm]{Conjecture}
\theoremstyle{remark}
\newtheorem*{remark}{Remark}
\newtheorem{preex}[thm]{Example}
\newtheorem*{notation}{Notation}
\newtheorem*{keywords}{Keywords}
\theoremstyle{definition}
\begin{document}

\title{Scaling limits for the threshold window:\\ When 
does a monotone Boolean function flip its outcome?}
\date{With an appendix by G\'abor Pete}
\author{Daniel Ahlberg and Jeffrey E. Steif}

\maketitle

\begin{abstract}
Consider a monotone Boolean function $f:\{0,1\}^n\to\{0,1\}$ and the canonical monotone coupling 
$\{\eta_p:p\in[0,1]\}$ of an element in $\{0,1\}^n$ chosen according to product measure with intensity 
$p\in[0,1]$. The random point $p\in[0,1]$ where $f(\eta_p)$ flips from $0$ to $1$ is often concentrated 
near a particular point, thus exhibiting a threshold phenomenon. For a sequence of such Boolean functions, 
we peer closely into this threshold window and consider, for large $n$, the limiting distribution 
(properly normalized to be nondegenerate) of this random point where the Boolean function
switches from being 0 to 1. We determine this distribution for a number of the Boolean functions which are 
typically studied and pay particular attention to the functions corresponding to iterated majority and percolation crossings. It turns out that these limiting distributions have quite varying behavior.
In fact, we show that any nondegenerate probability measure on $\R$ arises in this way for some sequence of Boolean functions.

\begin{keywords}
Boolean functions; sharp thresholds; influences; iterated majority function; near-critical percolation
\end{keywords}
\end{abstract}

\section{Introduction}

It has been known for quite some time that typical events involving many independent random variables exhibit ``thresholds" in the sense that the probability of the given event changes sharply as the parameter of the independent random variables varies. Observations of this kind were first made in the context of random graphs by Erd\H{o}s and R\'enyi~\cite{erdren60}. A more general understanding of the existence of threshold phenomena has since then been obtained through a series of papers. For instance, Russo~\cite{russo82} showed that a monotone event defined in terms of a family of independent Bernoulli variables exhibits a threshold if its dependence on each variable is small. Russo's result was later refined by Talagrand~\cite{talagrand94}. The first estimates on the ``sharpness" of the threshold were obtained by Friedgut and Kalai~\cite{frikal96}, critically building on work originating from Kahn, Kalai and Linial~\cite{kahkallin88}. Related results also appeared in~\cite{boltho87},~\cite{friedgut99} 
and elsewhere; see also~\cite{kalsaf06} for a more extensive overview of the field.

Less is known when it comes to closer inspections of the ``threshold window". Although the windows corresponding to certain graph properties are well understood, there is to our knowledge no general study of this transition. We aim with the present paper to offer a unified perspective on threshold transitions, and show that these transitions present quite varying behavior.

Let $f:\{0,1\}^n\to\{0,1\}$ be a monotone (increasing) Boolean function and assign $[0,1]$-uniform random 
variables $\xi_1,\xi_2,\ldots,\xi_n$ to the elements of $[n]:=\{1,2,\ldots,n\}$. For $p\in[0,1]$, let 
$\eta_p=\{i\in[n]:\xi_i\le p\}$, and note that
\begin{enumerate*}[label=\itshape\alph*)]
\item each $i\in[n]$ is present in $\eta_p$ with probability $p$ independently for different $i$; and
\item $i\in \eta_p$ implies $i\in \eta_{p'}$ for $p'>p$.
\end{enumerate*}
Thus, $\eta_p$ corresponds to an element $\omega\in\{0,1\}^n$ chosen according to 
product measure with intensity $p$, in the sequel denoted by $\Pr_p$, and $(\eta_p)_{p\in[0,1]}$ constitutes the 
standard monotone coupling of elements in $\{0,1\}^n$ chosen according to $\Pr_p$, as $p$ varies between 0 and 
1. We study the random point $p$ at which $f(\eta_p)$ changes from 0 to 1. 
For a given sequence of monotone Boolean functions $(f_n)_{n\ge 1}$,
our goal will be to find the (nondegenerate) limiting distribution of this random point after 
proper normalization, should it exist. In the most interesting examples, one has a threshold 
phenomenon where, for large $n$, $\Pr_p(f_n)$ goes from 0 to 1 within a very small interval, which 
results in this transition point having a degenerate limit; one then needs to renormalize
in order to obtain a nondegenerate limiting distribution.
(If there is no threshold phenomenon, then this point should already have a nondegenerate limit and no
further analysis is made.) One of
our goals is to describe the distribution of this random point for some commonly studied Boolean functions.

To be more precise, given a monotone Boolean function $f:\{0,1\}^n\to\{0,1\}$, we define the random variable
$$
T(f):=\min\{p\in[0,1]:f(\eta_p)=1\};
$$
this is the point where $f$ switches from 0 to 1 in the canonical coupling. Given a sequence $(f_n)_{n\ge1}$ 
of monotone Boolean functions $f_n:\{0,1\}^n\to\{0,1\}$, we want to find,
if possible, normalizing constants $(a_n)_{n\ge1}$ and $(b_n)_{n\ge1}$ with the $a_n$'s nonnegative
such that $a_n(T(f_n)-b_n)$ converges, as $n\to\infty$, to a nondegenerate limiting distribution, 
and to determine what that limit may be. Observe that for $x\in\R$
\be\label{eq:identity}
\Pr\big(a_n(T(f_n)-b_n)\le x\big)\,=\,\Pr\big(T(f_n)\le b_n+x/a_n\big)\,=\,\Pr_{p_n}\big(f_n(\omega)=1\big),
\ee
where $p_n=b_n+x/a_n$. Recall the theorem of types (see \cite[Theorem~3.7.5]{durrett10}) 
which tells us that there is {\em essentially} only one
way to normalize a sequence of random variables and that there is {\em essentially} at most one possible
nondegenerate limiting distribution. (The word ``essentially'' in the latter part of the statement
means ``up to a change of variables of the form $x\mapsto ax+b$'').

\begin{notation}
When understood from the context, we will write $T_n$ for $T(f_n)$.
\end{notation}

The dictatorship function, which for every $n$ outputs the value of the first coordinate of the input, clearly
has that $T_n$ is uniformly distributed on the interval $[0,1]$ for each $n$ and so no scaling is needed. 
A simple example where scaling is needed is the OR function which is 1 if and only if at least one bit is 1.
In this case, it is immediate to check that 
$nT_n$ converges in distribution to a unit exponential random variable.
The cases when nontrivial normalization is needed are exactly those covered in the next definition.

\begin{df}\label{d.threshold}
We say that $(f_n)_{n\ge1}$ has a {\bf threshold} if there exists a sequence $(p_n)_{n\ge1}$ such that for all $\eps>0$,
$$
\lim_{n\to\infty} \Pr_{p_n+\eps}(f_n=1)=1\quad \text{and}\quad
\lim_{n\to\infty} \Pr_{p_n-\eps}(f_n=1)=0.
$$
This is equivalent to $T_n-p_n$ approaching $0$ in distribution. (Often $p_n$ will not depend on $n$.)
\end{df}

We give an alternative description of $T(f)$ which will be useful to have in mind, in particular when
we study the Boolean function known as ``tribes''. Recall that a \emph{$1$-witness} for $f$
is a minimal set $W\subseteq[n]$ such that $\{\omega_i=1\text{ for all }i\in W\}$ implies $f(\omega)=1$. 
Similarly, a \emph{$0$-witness} for $f$ is a minimal set $W\subseteq[n]$ 
such that $\{\omega_i=0\text{ for all }i\in W\}$ implies $f(\omega)=0$. Witnesses may be used to characterize $T(f)$. 
Writing $\Ws^1$ for the set of 1-witnesses and $\Ws^0$ for the set of 0-witnesses for $f$, it is immediate to check that for any monotone Boolean function one has
\be\label{eq:WitnessCharacterization}
T(f)\,=\,\min_{W\in\Ws^1}\,\max_{i\in W}\,\xi_i\,=\,\max_{W\in\Ws^0}\,\min_{i\in W}\,\xi_i.
\ee

We next briefly discuss what one should {\em expect} the normalizing constants
$(a_n)_{n\ge1}$ and $(b_n)_{n\ge1}$ to be in typical situations. Certainly it is reasonable that
$b_n$ should be close to $\E[T_n]$. In cases where we have a threshold, heuristically, the 
size of the ``threshold interval" around $p_n$ where $\Pr_{p}(f_n=1)$ moves from being near 0 to 
being near 1 \emph{should} be governed by $\frac{d}{dp}\Pr_{p}(f_n=1)$ evaluated at $p=p_n$.
The Margulis-Russo formula (see e.g.~\cite[Theorem~III.1]{garste12})
tells us that this is equal to the total influence at $p_n$ (defined below). Therefore the total influence dictates what the scaling factor $a_n$ \emph{should} be. We mention that while this heuristic
for the scaling works in most natural examples, it is certainly not true in general. For example,
if $f_n$ is the Boolean function which is the AND of majority (to be defined later) on $n$ bits and
dictator, then the total influence will be of order $\sqrt{n}$ but no scaling ($a_n\equiv1$) is needed to obtain
a nondegenerate limit for $T_n$.

\begin{df}\label{d.influence}
Given a Boolean function  $f$ of $n$ variables and a variable $i\in [n]$, we say that  
$i$ is {\bf pivotal} for $f$ for $\omega$ if $f(\omega)\neq f(\omega^i)$
where $\omega^i$ is $\omega$ but flipped in the $i$th coordinate. 
The {\bf influence} of the $i$th bit with respect to $p$, denoted by $\Inf^p_i(f)$, is defined by 
\[
\Inf^p_i(f):=\Pr_p(\mbox{ $i$ is pivotal for $f$ }) 
\]
and the {\bf total influence} with respect to $p$ is defined to be $\sum_{i\in[n]} \Inf^p_i(f)$.
\end{df}

\subsection{Limiting behavior for some specific Boolean functions}

We now summarize some of the results that we will obtain. The paper will begin by analyzing
the limiting distribution of $T_n$ for the majority function (which will be normal),
the tribes function (which will be a reverse Gumbel distribution) and certain properties
associated to graphs, such as connectivity and clique containment. A connection between the
tribes function and the coupon collector problem is discussed. These results, which are not difficult,
are presented in Section \ref{s.majority}.

A class of functions that offers a quite interesting analysis is the so-called iterated majority functions.
For this class the analysis of the limiting distribution
of $T_n$ (both its existence and its properties) requires somewhat more work and involves
dynamical systems.
Given an odd integer $m\ge3$, the iterated $m$-majority function is defined recursively on $m^n$ bits 
as follows. One  constructs an $m$-ary tree of height $n$ and places 0's and 1's at the leaves. 
One takes the majority of the bits in each {\em family} of $m$ leaves and thus obtains 0 and 1 values 
for the nodes at height $n-1$. One then continues iteratively until the root is assigned a value. 
This is defined to be the output of the function.

The iterated majority function has been studied in various papers and
is of interest in, among other areas, theoretical computer science.
Here we simply mention one paper, by Mossel and O'Donnell~\cite{mosodo03}, where these
functions are explicitly studied. These authors showed that this
family, as $m$ varies, provides examples of sequences of monotone Boolean functions
where the ``noise sensitivity exponent" (which we do not define here) is
arbitrarily close to $1/2$.

In Section \ref{s.IM} we identify the precise rate of decay for the tails of the limiting distributions for this class of Boolean functions.
To state the result of the analysis for iterated majority, we let, for odd integers $m\ge3$,  
$$
\gamma(m):=\,m\,\binom{m-1}{\frac{m-1}{2}}\,2^{-(m-1)}\quad \text{and}\quad
\beta(m):=\,\frac{\log\frac{m+1}{2}}{\log\gamma(m)}.
$$

\begin{thm}\label{thm:m-majority} 
Consider, for each odd integer $m\ge3$, iterated $m$-ary majority on $m^n$ bits. 
\begin{enumerate}[label=\itshape\alph*),topsep=3pt,partopsep=1pt,itemsep=3pt,parsep=1pt]
\item
Then $\gamma(m)^n(T_n-\frac{1}{2})$ converges in distribution, as $n$ tends to infinity, to a random variable whose distribution $F_m$ is symmetric, absolutely continuous and fully supported on $\R$. Moreover, for $m\neq m'$, 
$F_m$ and $F_{m'}$ are not related by a linear change of variables.
\item
There exist constants $c_1=c_1(m)$ and $c_2=c_2(m)$ in $(0,\infty)$ so that for all $x\ge 1$,
$$
\exp(-c_1x^{\beta(m)})\,\le\,\Pr(W_m\ge x)\,\le\, \exp(-c_2x^{\beta(m)}),
$$
where $W_m$ has distribution $F_m$.
\item $\beta(m)$ is strictly increasing, taking values in the interval $(1,2)$
and approaches $2$ as $m\to\infty$.
\item The sequence $(F_m)_{m\ge 1}$ approaches, as $m\to\infty$,
a centered Gaussian with variance $(2\pi)^{-1}$.
\end{enumerate}
\end{thm}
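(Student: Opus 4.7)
The plan is to exploit the recursion inherent in iterated $m$-majority. Let $g_m(x):=\Pr(\Bin(m,x)>m/2)$; by the recursive structure of $f_n$, one has $\Pr_p(f_n=1)=g_m^{\circ n}(p)$, and \eqref{eq:identity} identifies the CDF of $\gamma(m)^n(T_n-\tfrac{1}{2})$ at the point $y$ with $g_m^{\circ n}(\tfrac{1}{2}+y/\gamma(m)^n)$. Direct computation gives $g_m(\tfrac{1}{2})=\tfrac{1}{2}$, $g_m'(\tfrac{1}{2})=\gamma(m)>1$ and $g_m(1-x)=1-g_m(x)$, so $\tfrac{1}{2}$ is a hyperbolic repelling fixed point of $g_m$. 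For part (a), I invoke the classical Koenigs linearization theorem: there exists a real-analytic $\varphi_m$ on a neighborhood of $\tfrac{1}{2}$ with $\varphi_m(\tfrac{1}{2})=0$, $\varphi_m'(\tfrac{1}{2})=1$ and $\varphi_m\circ g_m=\gamma(m)\cdot\varphi_m$. Iterating gives $g_m^{\circ n}(x)=\varphi_m^{-1}(\gamma(m)^n\varphi_m(x))$; plugging in $x=\tfrac{1}{2}+y/\gamma(m)^n$ and using $\varphi_m(\tfrac{1}{2}+\eps)=\eps+O(\eps^2)$ yields pointwise convergence of the CDF to $F_m(y):=\varphi_m^{-1}(y)$ for $y$ near $0$. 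The conjugacy $\varphi_m^{-1}(\gamma(m) y)=g_m(\varphi_m^{-1}(y))$, applied repeatedly, extends $\varphi_m^{-1}$ to a strictly increasing real-analytic function $F_m:\R\to(0,1)$ with $F_m(\pm\infty)\in\{0,1\}$, absolutely continuous and fully supported on $\R$. Symmetry $F_m(-y)=1-F_m(y)$ descends from $g_m(1-x)=1-g_m(x)$ forcing $\varphi_m$ to be odd about $\tfrac{1}{2}$, and non-equivalence of $F_m,F_{m'}$ under affine maps follows a posteriori from the distinct tail exponents in (b)(c).

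For part (b) the super-attracting behaviour of $g_m$ at the endpoints controls the tails. Expanding $g_m(x)=1-\binom{m}{(m-1)/2}(1-x)^{(m+1)/2}(1+o(1))$ as $x\to 1^-$, the recursion $\delta_{k+1}:=1-g_m(1-\delta_k)$ satisfies $\delta_{k+1}\sim C_m\delta_k^{\alpha}$ with $\alpha:=(m+1)/2$ and $C_m:=\binom{m}{(m-1)/2}$; taking logarithms and iterating yields $\log(1/\delta_k)=\alpha^k\log(1/\delta_0)(1+o(1))$ once $\delta_0$ is small. The extended conjugacy gives $1-F_m(\gamma(m)^k y_0)=\delta_k$ for any small $y_0>0$, and the substitution $y=\gamma(m)^k y_0$ converts $\alpha^k$ into $(y/y_0)^{\log\alpha/\log\gamma(m)}=(y/y_0)^{\beta(m)}$. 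Sandwiching the $\delta_k$ recursion between two autonomous recursions of the form $\delta_{k+1}\in[c_-\delta_k^\alpha,c_+\delta_k^\alpha]$ yields the matching two-sided stretched-exponential bound, and symmetry of $F_m$ handles the left tail.

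For (c), Stirling gives $\gamma(m)\sim\sqrt{2m/\pi}$, so $\log\gamma(m)\sim\tfrac{1}{2}\log m$ while $\log\tfrac{m+1}{2}\sim\log m$, giving $\beta(m)\to 2$; the bound $\beta(m)>1$ reduces to the elementary inequality $\gamma(m)<\tfrac{m+1}{2}$ for odd $m\ge 3$, and strict monotonicity follows by differentiating the explicit formula for $\beta(m)$ treated as a function of a continuous variable via the gamma function. For (d) I would use the alternative description of $T_n$ as an \emph{iterated median}: setting $Y_w:=\xi_w$ at the leaves of the $m$-ary tree and $Y_w:=\mathrm{median}(Y_{w_1},\ldots,Y_{w_m})$ at each internal node, the witness characterisation \eqref{eq:WitnessCharacterization} gives $T_n=Y_{\mathrm{root}}$. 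Proceeding by induction on $n$, the CLT for the sample median says that if $\gamma(m)^{n-1}(T_{n-1}-\tfrac{1}{2})$ has an asymptotically Gaussian density with variance $1/(2\pi)$ as $m\to\infty$ (hence density $1$ at the median), then the median of $m$ iid copies has asymptotic variance $1/(4m)$, which after rescaling by $\gamma(m)^2\sim 2m/\pi$ equals $1/(2\pi)$ again; the base case $n=1$ is the CLT for the median of $m$ iid uniforms. To pass from this ``each fixed $n$'' statement to the full limit $F_m$, I combine it with the uniform tail estimates from (b) to obtain tightness and interchange $\lim_n$ with $\lim_m$.

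The Koenigs step in (a) and the direct computation in (c) are essentially mechanical; the main obstacle is part (d). Since $\beta(m)<2$ for every finite $m$, the tails of $F_m$ are \emph{not} Gaussian at any $m$, and the approximation to $\mathcal{N}(0,1/(2\pi))$ is a bulk phenomenon only. A quantitative version of the iterated median CLT --- likely via an Edgeworth-type expansion for the sample median, combined with the stretched-exponential tail bounds from (b) and the fixed-point relation $F_m(y)=g_m(F_m(y/\gamma(m)))$ --- will be required to commute the two limits and obtain convergence of $F_m$ to $\mathcal{N}(0,1/(2\pi))$ in the Kolmogorov metric.
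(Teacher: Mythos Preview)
Your approach to part~(a) via Koenigs linearization is valid and genuinely different from the paper. The paper constructs the limit $L(\alpha)=\lim_n h^{(n)}(\alpha\gamma^{-n})$ directly, proving existence by monotonicity and then establishing Lipschitz continuity, strict monotonicity, continuous differentiability and surjectivity one at a time through explicit estimates. Your route is shorter: since $g_m$ is a polynomial with a repelling fixed point at $\tfrac12$ of multiplier $\gamma(m)>1$, Koenigs gives an analytic $\varphi_m$ with $\varphi_m\circ g_m=\gamma(m)\varphi_m$, and the standard proof of Koenigs shows that $g_m^{\circ n}(\tfrac12+y/\gamma(m)^n)\to\varphi_m^{-1}(y)$; the functional equation then extends $\varphi_m^{-1}$ to all of~$\R$. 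This yields real-analyticity of $F_m$ for free, which the paper does not obtain. Amusingly, the paper has a remark after its proof of~(a) cautioning that smoothness of $L$ cannot be deduced merely from its being \emph{a} conjugacy; your point is that Koenigs singles out \emph{the} analytic conjugacy normalized by $\varphi_m'(\tfrac12)=1$, so the remark does not apply. Part~(b) is essentially the same in both treatments. For~(c), the paper also computes directly, using the recursion $\gamma(m+2)=\tfrac{m+2}{m+1}\gamma(m)$ together with Taylor bounds on logarithms and Wallis-type bounds on $\gamma(m)$; your gamma-function differentiation idea is plausible but you have not carried it out.

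The real divergence, and the real gap in your proposal, is part~(d). Your iterated-median heuristic is correct as a heuristic, and you are right that the obstacle is interchanging $\lim_n$ and $\lim_m$; but you do not have a mechanism to do so, and the tail bounds from~(b) are not uniform in $m$ in any obvious way (indeed $\beta(m)\uparrow 2$, so the stretched-exponential tails get \emph{worse} compared to Gaussian as $m$ grows, not better). The paper bypasses this entirely. From the explicit product formula for the density,
\[
F_m'(y)=L_m'(y)=\prod_{k\ge1}\gamma(m)^{-1}h_m'\bigl(L_m(y\gamma(m)^{-k})\bigr)
=\prod_{k\ge1}\bigl(1-4L_m(y\gamma(m)^{-k})^2\bigr)^{(m-1)/2},
\]
one first shows $L_m'(y\gamma(m)^{-1})\to1$ as $m\to\infty$ (using $L_m(t)\le t$ and $\gamma(m)^2\sim 2m/\pi$), and then isolates the $k=1$ factor to get $L_m'(y)\to e^{-\pi y^2}$ pointwise. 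Scheff\'e's theorem then upgrades pointwise convergence of densities to weak convergence of $F_m$ to $\mathcal N(0,1/(2\pi))$. This is a direct computation on the limit object $F_m$ itself, with no double limit to commute; I recommend you replace your inductive median argument by this route.
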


\begin{remark}
Note that parts \emph{b)} and \emph{c)} together state that the tails of $F_m$ are between those of an exponential
and a Gaussian. The fact that $\beta(m)$ approaches 2 is consistent with part \emph{d)}.
\end{remark}

One of the most interesting and studied sequence of Boolean functions corresponds to percolation crossings of a square. The rich structure of this particular example has inspired an extensive analysis; some parts of this recent development are presented in the book~\cite{garste}. We will state below a result for this example whose proof, unlike the proofs of all other results in this paper which are proved from first principles, will be based on some recent highly nontrivial developments in percolation and in so called near-critical percolation, due to Garban, Pete and Schramm~\cite{garpetsch13,garpetsch}, building on work by Kesten~\cite{kesten87}, see also~\cite{nolin08}.

To even begin this, we need to introduce a number of different concepts. However, we will 
be very brief and refer to~\cite{werner09} and~\cite{garste12} for background and explanation
of terms which are not clear. We consider percolation on the hexagonal lattice embedded 
into $\R^2$. Given $n$, we will consider the set of hexagons contained inside of 
$[0,n]\times [0,n]$, denoted by $B_n$, and we will think of these hexagons as indexing 
our underlying i.i.d.\ random variables of which we will then have approximately $n^2$.
We let $f_n$ be the indicator function of the event that there is a path of hexagons 
from the left side of this box to the right side all of whose values are 1. It is 
well known that there is a threshold at $p=1/2$ which is the critical value for percolation 
on the (full) hexagonal lattice. For critical percolation on the hexagonal lattice, we 
let $\alpha_4(R)$ be the probability that there are four paths of alternating value from a 
neighbor of the origin 0 to distance $R$ away; this event is usually called the 
{\it four-arm} event.
See Figure~\ref{f.armsevents} (in the appendix) for a realization of this event (where 1 is replaced by black and 0 is replaced by white).
Using Schramm-Loewner evolution and conformal invariance, it was proved by Smirnov and Werner~\cite{smiwer01} that
\[
\alpha_4(R) = R^{-\frac 5 4 +o(1) }\, \mbox{ as } R\to\infty.
\]


A little bit of thought shows that if we have a hexagon $H$ in $B_n$, not too close to the 
boundary, which is pivotal for this crossing event, then the four-arm event to distance approximately $n$ 
centered at $H$ occurs. From here, it is possible to argue (see~\cite{garste12}) that 
the expected number of pivotal hexagons for $f_n$ is, up to constants, $n^2\alpha_4(n)$. 
This suggests what the proper scaling of $T_n$ should be and this turns out to be correct. 
The following result will be proved in an appendix to this paper authored by G\'abor Pete.

\begin{thm}\label{thm:Perc}
Consider percolation crossings of an $n\times n$-square of the hexagonal lattice.
\begin{enumerate}[label=\itshape\alph*),topsep=3pt,partopsep=1pt,itemsep=3pt,parsep=1pt]
\item Then $n^2\alpha_4(n)(T_n-\tfrac{1}{2})$ converges in distribution, as $n$ tends to infinity, to a random variable whose distribution $F$ is symmetric, absolutely continuous and fully supported on $\R$.
\item There exist constants $c_1$ and $c_2$ in $(0,\infty)$ so that for all $x\ge1$
$$
\exp(-c_1x^{4/3})\,\le\,\Pr(W\ge x)\,\le\,\exp(-c_2x^{4/3}),
$$
where $W$ has distribution $F$.
\end{enumerate}
\end{thm}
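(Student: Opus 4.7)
The plan is to translate both assertions into statements about the near-critical crossing probability $\Pr_{p_n(x)}(f_n=1)$ at $p_n(x):=\tfrac{1}{2}+x/(n^2\alpha_4(n))$ through the identity~\eqref{eq:identity},
$$
\Pr\bigl(n^2\alpha_4(n)(T_n-\tfrac{1}{2})\le x\bigr)=\Pr_{p_n(x)}(f_n=1),
$$
and then to feed these into the near-critical percolation scaling limit of Garban, Pete and Schramm~\cite{garpetsch13,garpetsch} together with Kesten's scaling relations~\cite{kesten87}.

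For existence of the limit in part \emph{a)}, I would invoke the GPS construction of the scaling limit of the near-critical ensemble on $B_n$, rescaled by $n$, with near-critical parameter window of order $1/(n^2\alpha_4(n))$. The left-right crossing of the unit square is a continuity event for the limiting massive ensemble, so $\Pr_{p_n(x)}(f_n=1)$ converges to some $F(x)$. Symmetry $F(-x)=1-F(x)$ reduces to colour/dual symmetry of hexagonal percolation at the self-dual point: up to a boundary correction that vanishes in the limit, there is a black left-right crossing of $B_n$ at level $p$ iff there is no white top-bottom crossing at level $1-p$, giving $\Pr_{1/2+\eps}(f_n=1)=1-\Pr_{1/2-\eps}(f_n=1)$ in the limit. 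Absolute continuity and full support of $F$ then follow from strict monotonicity of $F$ (provided by the GPS monotone coupling of all near-critical levels on a common probability space, with Poissonian pivotal updates) together with the fact that both primal and dual macroscopic connections occur with positive probability in the continuum ensemble at every finite $x\in\R$.

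For part \emph{b)}, the key tool is Kesten's characteristic length $L(p)$, defined for $p>\tfrac{1}{2}$ by $(p-\tfrac{1}{2})\,L(p)^2\,\alpha_4(L(p))=1$. Using $\alpha_4(R)=R^{-5/4+o(1)}$ one obtains $L(p)=(p-\tfrac{1}{2})^{-4/3+o(1)}$, and since $n^2\alpha_4(n)=n^{3/4+o(1)}$, we get $L(p_n(x))\asymp n/x^{4/3}$, hence $n/L(p_n(x))\asymp x^{4/3}$. The upper tail equals $\Pr_{p_n(x)}(f_n=0)$, and the standard exponential decay of the non-crossing probability slightly above $p_c=1/2$,
$$
\Pr_p\bigl(\text{no black left-right crossing of }[0,n]^2\bigr)\le C\exp\bigl(-c\,n/L(p)\bigr),
$$
derived via RSW together with Kesten's scaling relations, yields the upper bound $\exp(-c_2 x^{4/3})$. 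For the matching lower bound I would stack $\asymp x^{4/3}$ disjoint horizontal strips of height $L(p_n(x))$ across $B_n$, force a dual white top-to-bottom crossing in each strip (each has probability uniformly bounded away from zero by RSW at the characteristic length), and combine by the FKG inequality to obtain a configuration with no black left-right crossing. The lower tail $\Pr(W\le -x)$ follows from this together with the symmetry $F(-x)=1-F(x)$ from part \emph{a)}.

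The main technical obstacle is the convergence step in part \emph{a)}: establishing an actual limit (rather than merely tightness and subsequential ones) and a non-degenerate limiting law. This is exactly the content of the GPS near-critical scaling limit theorem, so the real work lies in setting up and citing that framework correctly; once this is available, the tail exponents in part \emph{b)} follow from standard Kesten-style arguments, with the exponent $4/3$ emerging cleanly from the scaling $L(p)\asymp(p-\tfrac{1}{2})^{-4/3+o(1)}$ dictated by the four-arm exponent $5/4$.
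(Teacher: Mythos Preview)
Your outline for part~\emph{a)} is correct and matches the paper's.

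For part~\emph{b)} there is a real gap. You infer $n/L(p_n(x))\asymp x^{4/3}$ from $\alpha_4(R)=R^{-5/4+o(1)}$ and $n^2\alpha_4(n)=n^{3/4+o(1)}$, but those asymptotics carry an $o(1)$ in the exponent that does not disappear when you solve for $L$: since $\alpha_4(R)\asymp R^{-5/4}$ with genuine multiplicative constants is \emph{not} known, chasing the exponents only yields $n/L(p_n(x))=x^{4/3+o(1)}$, and hence at best $\Pr(W\ge x)=\exp\bigl(-x^{4/3+o(1)}\bigr)$, which is strictly weaker than the theorem as stated. The paper (in the appendix by Pete) closes this gap by invoking not just the existence of the GPS limit but its exact scale covariance from \cite[Theorem~10.3]{garpetsch}: writing $f(\lambda,\Quad):=\lim_n\Pr_{p_c+\lambda r(n)}(\LR_{n\Quad})$, one has $f(\rho\lambda,\Quad)=f(\lambda,\rho^{4/3}\Quad)$ with an \emph{exact} exponent $4/3$, this being a property of the continuum object. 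Thus (using duality) $\Pr(W\ge x)=f(-x,[0,1]^2)=f(-1,[0,x^{4/3}]^2)$, and the right-hand side is the $n\to\infty$ limit of $\Pr_{p_c-r(n)}\bigl(\LR_{[0,x^{4/3}n]^2}\bigr)$. Now the near-critical parameter is fixed at $-1$, so Kesten's relation $L(p_c-r(n))=\Theta(n)$ holds with honest multiplicative constants (no arm exponent needed), and the two-sided estimate $\Pr_p\bigl(\LR_{[0,N]^2}\bigr)=\exp\bigl(-\Theta(N/L(p))\bigr)$ of \cite{nolin08} gives $\exp\bigl(-\Theta(x^{4/3})\bigr)$ directly. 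In short, the exact $4/3$ must come from the continuum scale covariance, not from lattice arm-exponent asymptotics; your route loses precisely the precision the statement requires. (Separately, your FKG construction for the lower bound is slightly off as written---white top-to-bottom crossings of disjoint horizontal strips do not concatenate into a white top-to-bottom crossing of the full box---but this is a routine repair and is in any case subsumed by the two-sided estimate cited from \cite{nolin08}.)
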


\begin{remark}
The fact that the limit in part~\emph{a)} of this theorem exists follows from recent results due to Garban, Pete and Schramm~\cite[Theorem~1.5 and Proposition~9.6]{garpetsch}, as stated already in a previous version of this paper. However, the precise rate of decay, stated in part~\emph{b)}, of the tails of the limiting distribution was not known to us at the time, and only later found by the appendix author.
\end{remark}

\subsection{Limiting behavior for general Boolean functions}

When one considers the question about the limiting distribution of $T_n$ for a given
sequence of Boolean functions, it is natural to ask
which distributions on $\R$ can arise as normalized limits of such a sequence of $T_n$. The next result,
proved in Section \ref{s.DistLimits}, says that they all do.
We remind the reader that a function $f:\{0,1\}^n\to\{0,1\}$ is called transitive if it is invariant with respect to a transitive group of permutations of $[n]$.

Part~\emph{b)} of the following theorem has been obtained jointly with Anders Martinsson.

\begin{thm}\label{thm:newLIMITS}
Let $\mu$ denote any probability measure on $\R$.
\begin{enumerate}[label=\itshape\alph*),topsep=3pt,partopsep=1pt,itemsep=3pt,parsep=1pt]
\item For any sequence $(a_n)_{n\ge1}$ satisfying $1\ll a_n\ll\sqrt{n}$,
there exists a sequence $(f_n)_{n\ge 1}$ of monotone functions 
$f_n:\{0,1\}^n\to\{0,1\}$ for which $a_n(T_n-\frac{1}{2})$ approaches $\mu$ in distribution.
\item For any sequence $(a_n)_{n\ge1}$ satisfying $\log n\ll a_n\ll\sqrt{n}$,
there exists a sequence $(f_n)_{n\ge 1}$ of monotone and transitive functions 
$f_n:\{0,1\}^n\to\{0,1\}$ for which $a_n(T_n-\frac{1}{2})$ approaches $\mu$ in distribution.
\end{enumerate}
\end{thm}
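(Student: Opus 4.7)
For part \emph{a)}, the plan is first to approximate $\mu$ weakly by a finitely supported measure $\mu_N = \sum_{j=1}^N q_j \delta_{x_j}$ with distinct atoms $x_1<\cdots<x_N$ and CDF $F_N$, realise each $\mu_N$ by a sequence of monotone Boolean functions, and then diagonalise over $N\to\infty$. For fixed $N$, I would take
\[
f_n \;=\; \bigvee_{j=1}^N \bigl(\psi_j \wedge G_j\bigr),
\]
where the pairs $(\psi_j,G_j)$ live on pairwise disjoint blocks of bits. Here $\psi_j$ is a thresholded bit-count on $m_n$ bits with $a_n^2 \ll m_n \ll n/N$, whose threshold is chosen so that $\Pr_p(\psi_j=1)$ transitions sharply from $0$ to $1$ as $p$ crosses $\tfrac{1}{2}+x_j/a_n$ --- by the CLT the transition window has width $O(1/\sqrt{m_n})=o(1/a_n)$ --- and $G_j$ is a small monotone Boolean function on a constant number of further bits tuned so that $\Pr_{1/2}(G_j=1)$ approximates $\pi_j := (F_N(x_j)-F_N(x_{j-1}))/(1-F_N(x_{j-1}))$, a choice that telescopes to $\prod_{i\le j}(1-\pi_i)=1-F_N(x_j)$. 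Independence of the $\psi_j\wedge G_j$ across $j$ then yields, for $p=\tfrac12 + x/a_n$,
\[
\Pr_p(f_n=0) \;=\; \prod_{j=1}^N \bigl(1-\Pr_p(\psi_j=1)\,\Pr_p(G_j=1)\bigr) \;\longrightarrow\; \prod_{j:\,x_j\le x}(1-\pi_j) \;=\; 1-F_N(x),
\]
so by the identity~(\ref{eq:identity}), $a_n(T_n-\tfrac12)$ converges in distribution to $\mu_N$. Monotonicity of $f_n$ is immediate, and the achievability of any $\pi\in(0,1)$ (up to arbitrary precision) as $\Pr_{1/2}(G=1)$ for some small monotone $G$ is a standard combinatorial fact.

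For part \emph{b)}, I would tile the part-\emph{a)} construction. With $L=L_n\to\infty$ and tiles of size $N=\lfloor n/L\rfloor$, take $\tilde f_n = \bigvee_{l=1}^{L} g^{(l)}$, where the $g^{(l)}$ are independent copies of a common tile function $g$ on disjoint blocks. Independence across tiles gives
\[
\Pr_p\bigl(\tilde f_n = 0\bigr) \;=\; \bigl(1-\Pr_p(g=1)\bigr)^{L},
\]
so to produce a given limiting CDF $F$ it suffices that $\Pr_{\frac12 + x/a_n}(g=1)\sim c(x)/L$ with $c(x)=-\log(1-F(x))$. I would realise this by setting $g = g_a \wedge D_1 \wedge\cdots\wedge D_k$, where $g_a$ is a part-\emph{a)}-style construction within a single tile approximating the (possibly unbounded) intensity profile $c$ truncated at a slowly growing level, and $D_1,\ldots,D_k$ are distinct dictator bits inside the tile. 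The attenuator then contributes a factor $p^k = 2^{-k}(1+2x/a_n)^k \to 2^{-k}$ provided $k/a_n\to 0$; choosing $k=\lceil\log_2 L\rceil$ makes $p^k\sim 1/L$. Since the tile must be of size polynomial in $a_n$, $L$ must be of order $n/\mathrm{poly}(a_n)$, and the condition $\log L\ll a_n$ translates exactly into the hypothesis $\log n\ll a_n$. Transitivity on $[n]$ is arranged by first making the tile function $g$ itself invariant under a transitive action on its $N$ tile-bits --- for instance by replacing $g$ with the OR over all its cyclic shifts inside the tile --- and then combining with the permutation action on tiles, producing a wreath-product type group acting transitively on $[n]$.

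The main obstacle sits in part \emph{b)}: the tile size $N$, the number of tiles $L$, and the number of dictator attenuators $k$ must all be balanced so that simultaneously (i) the sharp threshold sub-structure of $g_a$ inside each tile survives, (ii) the attenuator $p^k$ is approximately constant on the window $p=\tfrac12 + O(1/a_n)$, and (iii) the final function is transitive. It is precisely these competing constraints that force the stronger hypothesis $\log n \ll a_n$, and making this balancing rigorous is where most of the technical work will lie.
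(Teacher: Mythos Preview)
Your reduction to finitely supported $\mu$ via diagonalisation matches the paper exactly, and your construction for part \emph{a)} is correct, though it differs in structure from the paper's. The paper uses \emph{nested} events on shared bits rather than independent gadgets on disjoint blocks: $E_i$ is the event that the proportion of $1$'s among all $n$ bits exceeds $\tfrac12 + x_i/a_n$ (so $E_1\supseteq\cdots\supseteq E_k$), and $F_i$ is the event that the proportion among the first $\lfloor a_n\rfloor$ bits exceeds $\tfrac12 + y_i/(2\sqrt{a_n})$, with $y_i$ chosen via the Gaussian so that $\Pr_{p_n}(F_i)\to q_1+\cdots+q_i$ (so $F_1\subseteq\cdots\subseteq F_k$). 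With $f_n$ the indicator of $\bigcup_i(E_i\cap F_i)$, for $x\in(x_j,x_{j+1})$ the limit collapses to $\lim_n\Pr_{p_n}(F_j)$. Your disjoint-block scheme with telescoping hazard ratios $\pi_j$ achieves the same end; the paper's version is more economical in bits and reads the limits directly from $\Phi$, while yours makes the independence structure completely explicit.

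Part \emph{b)}, however, has a genuine gap at the transitivity step. You propose to render the tile function $g$ cyclically invariant on its $N$ bits by replacing it with the OR over all its cyclic shifts, but this operation does not preserve the asymptotic $\Pr_p(g=1)\sim c(x)/L$: the $N$ shifts of $g$ are highly correlated (your majority blocks $\psi_j$ are nearly shift-invariant themselves and so overlap almost completely with their shifts), and there is no reason the symmetrised probability should equal a fixed multiple of the original, let alone retain the correct $x$-dependence. Without that, the wreath-product group you invoke does not act on a function with the right limiting behaviour. A secondary issue is that with $k=\lceil\log_2 L\rceil$ you need $\Pr_p(g_a=1)\to c(x)$, which is impossible once $c(x)=-\log(1-F(x))>1$; this one is repairable (take $2^{-k}\sim M/L$ and have $g_a$ target the CDF $x\mapsto c(x)/M$ instead), but as written it is inconsistent.

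The paper's route for part \emph{b)} avoids tiling entirely. It keeps the $E_i$'s from part \emph{a)}, which are already rotation-invariant since they depend only on the total bit-count, and replaces the $F_i$'s by rotation-invariant events of a different kind: place the $n$ bits on a circle, fix a total order on $\{0,1\}^{\ell_n}$ with $\ell_n=\lfloor 2\log_2 n\rfloor$ extending the coordinatewise partial order, and let $F(y)$ be the event that some interval of length $\ell_n$ is at least $y$ in this order. One shows that every $q\in[0,1]$ is hit to within $2/n$ by some $\Pr_{1/2}(F(y))$, and that $|\Pr_{p_n}(F(y))-\Pr_{1/2}(F(y))|=O(\ell_n/a_n)\to 0$; this last estimate is exactly where the hypothesis $\log n\ll a_n$ enters. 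The resulting $f_n$ is then monotone and rotation-invariant on $[n]$ by design, with no post-hoc symmetrisation needed.
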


\begin{remark}
By modifying the construction leading to part~\emph{b)} one may obtain, for any sequence $(a_n)_{n\ge1}$ satisfying $(\log n)^2\ll a_n\ll n$, a sequence $(f_n)_{n\ge1}$ of monotone graph properties, defined on $n$ vertices and ${n \choose 2}$ edges, for which $a_n(T_n-\frac{1}{2})$ approaches $\mu$ in distribution.
Moreover, the centralizing coefficient of Theorem~\ref{thm:newLIMITS} could in greater generality be replaced by any sequence $(b_n)_{n\ge1}$ bounded away from 0 and 1, although this may be of less interest.
We further mention that Rossignol~\cite{rossignol07} has previously showed that for any sufficiently smooth sequence $(a_n)_{n\ge1}$ satisfying $\log n\le a_n\le\sqrt{n}$ there exists an increasing sequence $(N(n))_{n\ge1}$ and monotone and transitive Boolean functions $(f_{N(n)})_{n\ge1}$ with a threshold at $1/2$ of width $1/a_n$.
\end{remark}

There are some conditions that the scaling coefficients $(a_n)_{n\ge1}$ have to meet in order to obtain a non-degenerate limit. Further restrictions apply in order not to impose properties on the limiting distribution. These facts are described in the following proposition. While some of these facts are well known to many we present them here for convenience to the reader, and provide a proof in Section~\ref{s.DistLimits}. Together they show that Theorem~\ref{thm:newLIMITS} is in fact sharp.



\begin{prop}\label{prop:newNoLIMITS}
Assume that $a_n(T_n-b_n)$ converges, as $n$ tends to infinity, to some non-degenerate probability measure $\mu$, and that $(b_n)_{n\ge1}$ is bounded away from $0$ and $1$. Then:
\begin{enumerate}[label=\itshape\alph*),topsep=3pt,partopsep=1pt,itemsep=3pt,parsep=1pt]
\item The sequence $(a_n/\sqrt{n})_{n\ge1}$ is bounded from above.
\item If the functions $(f_n)_{n\ge1}$ are transitive, then $(a_n/\log n)_{n\ge1}$ is bounded away from $0$.
\item If $(a_n)_{n\ge1}$ is bounded from above, then $\mu$ is necessarily fully supported on a (finite) interval.
\item If the functions $(f_n)_{n\ge1}$ are transitive and $(a_n/\log n)_{n\ge1}$ is bounded from above, then $\mu$ is necessarily fully supported on a (possibly infinite) interval.
\item If $(a_n/\sqrt{n})_{n\ge1}$ is bounded away from $0$, then $\mu$ is necessarily absolutely continuous.
\end{enumerate}
\end{prop}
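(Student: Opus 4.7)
The plan is to exploit the identity \eqref{eq:identity} together with the Margulis--Russo identification of the density of $T_n$ at $p$ as the total influence $I_p(f_n) := \sum_i \Inf_i^p(f_n)$. Three classical influence estimates will do the heavy lifting: the Poincar\'e inequality $\Var_p(f_n) \le p(1-p)\, I_p(f_n)$; for transitive $f_n$ (so that every influence equals $I_p(f_n)/n$), the Kahn--Kalai--Linial bound $I_p(f_n) \ge c\,\Var_p(f_n)\log n$; and, for monotone $f_n$ and $p$ in compact subsets of $(0,1)$, the Harper/Kruskal--Katona edge-isoperimetric upper bound $I_p(f_n) \le C(p)\sqrt{n}$. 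The hypothesis that $b_n$ is bounded away from $0$ and $1$, together with non-degeneracy of $\mu$, will ensure that all relevant $p$-values lie in a compact subset of $(0,1)$ for large $n$.

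I would first dispatch parts \emph{a)} and \emph{e)} using only the isoperimetric upper bound. For \emph{a)} one may assume $a_n \to \infty$ (else $a_n = O(1) = O(\sqrt{n})$). Pick continuity points $y_1 < y_2$ of $\mu$ with $\mu((y_1, y_2]) > 0$; then the interval $[b_n + y_1/a_n, b_n + y_2/a_n]$ eventually lies in a compact subset of $(0,1)$, so
\[
\mu((y_1, y_2]) + o(1) \;=\; \int_{b_n + y_1/a_n}^{b_n + y_2/a_n} I_p(f_n)\,dp \;\le\; \frac{C\,(y_2-y_1)\,\sqrt{n}}{a_n},
\]
forcing $a_n = O(\sqrt{n})$. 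For \emph{e)}, the density of $Y_n := a_n(T_n - b_n)$ at $x$ equals $I_{b_n + x/a_n}(f_n)/a_n \le C\sqrt{n}/a_n \le C'/c$ uniformly for $x$ in compacta. The distribution functions of $Y_n$ are then uniformly Lipschitz, so the limiting CDF is Lipschitz, and $\mu$ is absolutely continuous.

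For parts \emph{b)}, \emph{c)}, \emph{d)} I would use a common geometric setup. By non-degeneracy of $\mu$, pick continuity points $y_1 < y_2$ with $0 < \mu((-\infty, y_1]) \le \mu((-\infty, y_2]) < 1$, and write $p_n^{(i)} := b_n + y_i/a_n$; then $\Pr_{p_n^{(i)}}(f_n) \to \mu((-\infty, y_i])$, and monotonicity ensures $\Pr_p(f_n)$ remains in a compact subset of $(0,1)$ throughout $[p_n^{(1)}, p_n^{(2)}]$ for large $n$. For \emph{b)}, choose the $y_i$ with $\mu((-\infty, y_1]) < \mu((-\infty, y_2])$; then $\Var_p(f_n)$ is bounded below on the interval, and transitivity combined with KKL gives $I_p(f_n) \ge c''\log n$ there, whence
\[
\mu((y_1, y_2]) + o(1) \;=\; \int_{p_n^{(1)}}^{p_n^{(2)}} I_p(f_n)\,dp \;\ge\; c''\,\log n \cdot \frac{y_2-y_1}{a_n},
\]
giving $a_n \gtrsim \log n$. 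For \emph{c)} and \emph{d)} I would rule out a gap $(y_1, y_2)$ in the support of $\mu$ by taking $q := \mu((-\infty, y_1]) = \mu((-\infty, y_2)) \in (0,1)$ for continuity points bracketing the gap; then $\Var_p(f_n) \to q(1-q) > 0$ uniformly on $[p_n^{(1)}, p_n^{(2)}]$, while $\int I_p\,dp = \Pr_{p_n^{(2)}}(f_n) - \Pr_{p_n^{(1)}}(f_n) \to 0$. Under \emph{c)}, $a_n \le C$ forces $p_n^{(2)} - p_n^{(1)} \ge (y_2-y_1)/C$, and Poincar\'e gives $I_p(f_n) \ge 2q(1-q)$ on the interval---contradiction. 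Under \emph{d)}, $a_n \le C\log n$ forces $p_n^{(2)} - p_n^{(1)} \ge (y_2-y_1)/(C\log n)$, and transitivity plus KKL gives $I_p(f_n) \ge c''\log n$---again a contradiction. Boundedness of the support in \emph{c)} is immediate from $T_n \in [0,1]$ and $a_n \le C$.

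The main obstacle in this plan is justifying the sharp upper bound $I_p(f_n) \le C(p)\sqrt{n}$ used in \emph{a)} and \emph{e)}: at $p = 1/2$ it is a consequence of the edge-isoperimetric inequality on the hypercube (Harper/Kruskal--Katona), and the $p$-biased extension, while standard, requires some care; the remaining steps are elementary integrations of the pointwise bounds on $I_p(f_n)$.
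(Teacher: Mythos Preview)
Your proposal is correct and takes essentially the same approach as the paper: both proofs combine the Margulis--Russo formula with the same three influence estimates (the edge-isoperimetric $\sqrt{n}$ upper bound for parts~\emph{a)} and~\emph{e)}, the Poincar\'e inequality for~\emph{c)}, and KKL for the transitive parts~\emph{b)} and~\emph{d)}). The only cosmetic difference is that you integrate the pointwise bounds on $I_p(f_n)$ over $[p_n^{(1)},p_n^{(2)}]$, whereas the paper passes through the mean value theorem to extract a single point $q_n\in[p_1,p_2]$ and works directly with the difference quotient $\big(F(x_2)-F(x_1)\big)/(x_2-x_1)$; your contradiction argument for~\emph{c)} and~\emph{d)} is the contrapositive of the paper's direct verification that $F(x_1)<F(x_2)$.
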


\begin{remark}
Part~\emph{e)} of the above proposition was pointed out to us by Anders Martinsson. We also mention that the statement in the previous remark regarding graph properties is also essentially best possible, due to the work of Bourgain and Kalai~\cite{boukal97}.
\end{remark}

Interestingly, there are sequences of nondegenerate random variables $(X_n)_{n\ge 1}$
which are not {\em renormalizable} in the sense that for no subsequence $(X_{n_k})_{k\ge 1}$
are there normalizing constants $(a_k)_{k\ge1}$ and $(b_k)_{k\ge1}$ with the $a_k$'s 
nonnegative so that $a_{k}(X_{n_k}-b_k)$ converges, as $k\to\infty$, to a nondegenerate limiting 
distribution. A typical example of such a sequence is given by $X_n=e^{nZ}$ where $Z$ is a standard 
normal random variable. The vague idea is that when we try to scale down to keep mass from 
going to infinity, then the result will be that all the mass is accumulating at 0. 
Another example, which we will exploit, is when $X_n$ is uniformly distributed on 
$\{\pm 2^k: k=1,2,\ldots,n\}$.

The following proposition
shows that one cannot necessarily
extract a subsequence of $(T_n)_{n\ge1}$ which after normalization converges to a nondegenerate limit.

\begin{prop}\label{prop:NoSubseq}
There exists a (nondegenerate) sequence $(f_n)_{n\ge 1}$ of monotone (and transitive)
Boolean functions $f_n:\{0,1\}^n\to\{0,1\}$ so that no subsequence of $(T_n)_{n\ge1}$ can be renormalized to have a 
nondegenerate limiting distribution.
\end{prop}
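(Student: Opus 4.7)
The plan is to build $(f_n)$ whose $T_n$-laws are concentrated on $2m_n$ geometrically spaced atoms near $1/2$, where $m_n\to\infty$, so that any affine rescaling either forces all mass to accumulate at a single point or to escape to $\pm\infty$. Concretely, let $\nu_n$ be the uniform probability measure on the $2m_n$ points $\{1/2 + s\cdot 2^{-k-2} : s\in\{\pm 1\},\ k=1,\ldots,m_n\}$, all of which lie in $[3/8,5/8]$.

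The first step is an analytic lemma: if $X_n\sim\nu_n$, then no subsequence of $(a_{n}(X_{n}-b_{n}))$ can converge in distribution to a nondegenerate probability measure on $\mathbb R$ for any choice of $a_n>0$ and $b_n\in\mathbb R$. Writing $c_n:=a_n(1/2-b_n)$, the atoms of $a_n(X_n-b_n)$ are the $2m_n$ points $c_n \pm a_n\cdot 2^{-k-2}$ ($k=1,\ldots,m_n$), each of mass $1/(2m_n)$, forming two geometric sequences of common ratio $1/2$ accumulating at $c_n$. After passing to a subsequence along which $c_n$ has a limit in $[-\infty,\infty]$, a short case analysis shows: either the ring of atoms collapses so that essentially all $2m_n$ of them lie in every neighborhood of $\lim c_n$, producing a degenerate point mass in the limit; or the atoms spread over many orders of magnitude, in which case each fixed bounded interval $[A,B]$ meets only $O(\log_2(B-A)+1)$ of them (since within each sign they form a geometric progression), so the mass in $[A,B]$ is $O(\log)/m_n\to 0$ and no probability measure on $\mathbb R$ can arise as the weak limit.

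The second step is to realize $\nu_n$ (approximately) as $\mathrm{Law}(T(f_n))$ for a transitive monotone $f_n:\{0,1\}^n\to\{0,1\}$. I would appeal to (an adaptation of) the proof of Theorem~\ref{thm:newLIMITS}(b) in Section~\ref{s.DistLimits}, which builds, for any prescribed probability measure $\mu$ and any admissible scaling $\log n\ll a_n\ll\sqrt n$, transitive monotone Boolean functions whose $T_n$-law quantitatively approximates $\mu$ after rescaling. Choosing $m_n$ compatible with the admissible scaling window and running the construction with the varying target $\nu_n$ in place of a fixed $\mu$ should yield $f_n$ with $\mathrm{Law}(T(f_n))$ close enough to $\nu_n$ in Kolmogorov distance that the conclusion of the lemma transfers to $(T(f_n))$.

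The principal technical obstacle is this last step: extracting from the proof of Theorem~\ref{thm:newLIMITS}(b) enough quantitative uniformity to handle a sequence of varying targets, not just a single fixed measure. For the non-transitive version of the proposition this can be sidestepped by the direct construction $f_n=\bigvee_j(S_j\land g_j)$, where $S_j$ is the AND of $L:=\lceil\log_2(2m_n)\rceil$ fresh selector bits and $g_j$ is a sharp weighted-threshold subfunction with threshold $\tau_j=1/2+s_j\cdot 2^{-k_j-2}$ on a large disjoint block; a direct tribes-like computation yields $\Pr(T(f_n)\le p)=1-(1-p^L)^{K(p)}$ with $K(p)=\#\{j:\tau_j\le p\}$, which for $p\approx 1/2$ is $\approx K(p)/(2m_n)$, matching the CDF of $\nu_n$.
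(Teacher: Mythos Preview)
Your approach is essentially the same as the paper's: exploit the fact that the uniform measure on geometrically spaced points (the paper uses $\{\pm 2^k:k\le m\}$, you use $\{1/2\pm 2^{-k-2}:k\le m_n\}$, which is the same after an affine change) has no renormalizable subsequential limit, and then realize approximations to these measures as laws of $T(f_n)$ via Theorem~\ref{thm:newLIMITS}. Your analytic lemma is correct.

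The key difference is in how the second step is executed, and here the paper's argument is considerably cleaner than what you propose. You correctly identify the ``principal technical obstacle'': extracting quantitative uniformity from the proof of Theorem~\ref{thm:newLIMITS}(b) so that a varying target $\nu_n$ can be hit with error $o(1)$. The paper sidesteps this entirely by a diagonal argument. For each \emph{fixed} $m$, Theorem~\ref{thm:newLIMITS} (used as a black box, with $a_n=n^{1/4}$) gives functions $f_{m,n}$ on $n$ bits with $a_n(T(f_{m,n})-\tfrac12)\Rightarrow\mu_m$; since $\mu_m$ has continuous distribution function $F_m$, this convergence is uniform, so one may choose $n_m$ with $\sup_x|F_{m,n}(x)-F_m(x)|\le 1/m$ for all $n\ge n_m$. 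Then set $f_n:=f_{m,n}$ for $n\in[n_m,n_{m+1})$. If some affine rescaling of a subsequence of $T(f_n)$ had a nondegenerate limit $F$, the triangle inequality would force $F_{m_n}(c_n x+b_n)\to F(x)$ along that subsequence as well, contradicting the non-renormalizability of $(\mu_m)$. No uniformity in the construction is needed; the point is simply that convergence to a continuous limit is automatically uniform in Kolmogorov distance.

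A minor remark on your direct construction for the non-transitive case: the formula $\Pr(T(f_n)\le p)=1-(1-p^L)^{K(p)}$ is correct (modulo the sharpness of the $g_j$), but at $p=1/2$ with $K(p)=m_n$ it gives $1-(1-1/(2m_n))^{m_n}\approx 1-e^{-1/2}$, not $1/2$, so the resulting law is not $\nu_n$ but a monotone transform of it. This does not spoil the non-renormalizability argument, since the jumps of the CDF are still at the geometrically spaced $\tau_j$, but the phrase ``matching the CDF of $\nu_n$'' is inaccurate.
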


\begin{remark}
The proof of this result, provided in Section~\ref{s.DistLimits}, will be based on Theorem~\ref{thm:newLIMITS}. We may therefore assume that the functions in the obtained sequence are transitive if we so wish.
\end{remark}

We end this introductory section with an open question.
As the reader may notice, all examples we have worked out yield a limiting distribution with exponentially or super-exponentially decaying tails. Although our Theorem~\ref{thm:newLIMITS} certainly shows that there are (sequences of) monotone Boolean functions giving rise to limiting distributions with heavier tails, the examples we construct are not very natural. Are there any ``natural'' examples whose limiting distributions present sub-exponential tails? 

\section{Some elementary examples}\label{s.majority}

\subsection{Majority and the standard normal}

Majority is an example providing nontrivial, although classical, scaling behavior. The majority function 
on $n$ bits is defined to output the value 1 if there are at least $n/2$ bits with the value~1. 
More generality, we consider biased majority, which is the function with output 1 if and only if there are 
at least $pn$ bits valued 1, where $p\in(0,1)$ is a fixed parameter. The correct scaling factor will be 
of order $\sqrt{n}$ and the limit will be Gaussian, as stated in the following proposition.

\begin{prop}
For every $p\in(0,1)$ we have for the $p$-biased majority function on $n$ bits that $\sqrt{\frac{n}{p(1-p)}}(T_n-p)$ converges in distribution to a standard normal.
\end{prop}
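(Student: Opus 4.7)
The plan is to exploit the identity \eqref{eq:identity} to convert the convergence of $T_n$ into a central limit statement for binomial probabilities. First I would observe that for the $p$-biased majority function, $T_n$ admits a concrete description as an order statistic: since $f_n(\eta_q)=1$ iff $|\{i:\xi_i\le q\}|\ge pn$, we have $T_n=\xi_{(k_n)}$ with $k_n=\lceil pn\rceil$, the $k_n$th order statistic of the i.i.d.\ uniforms. Equivalently, just by writing out the identity \eqref{eq:identity} with $a_n=\sqrt{n/(p(1-p))}$ and $b_n=p$, one gets
\[
\Pr\big(a_n(T_n-p)\le x\big)\,=\,\Pr_{p_n}(f_n=1)\,=\,\Pr\big(S_n\ge k_n\big),
\]
where $p_n:=p+x/a_n=p+x\sqrt{p(1-p)/n}$ and $S_n\sim\mathrm{Bin}(n,p_n)$.

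Next I would standardize $S_n$. Since $p_n\to p\in(0,1)$, the mean $np_n=np+x\sqrt{np(1-p)}$ and variance $np_n(1-p_n)\sim np(1-p)$ behave regularly, and a direct computation gives
\[
\frac{k_n-np_n}{\sqrt{np_n(1-p_n)}}\,=\,\frac{\lceil pn\rceil-np-x\sqrt{np(1-p)}}{\sqrt{np_n(1-p_n)}}\,\longrightarrow\,-x,
\]
as $n\to\infty$ (the integer-part correction is $O(1/\sqrt{n})$ and negligible). By the central limit theorem for triangular arrays of bounded independent random variables (e.g.\ Lindeberg--Feller), $(S_n-np_n)/\sqrt{np_n(1-p_n)}$ converges in distribution to a standard normal $Z$, so $\Pr(S_n\ge k_n)\to\Pr(Z\ge -x)=\Phi(x)$. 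Combining this with the previous display gives
\[
\Pr\big(a_n(T_n-p)\le x\big)\,\longrightarrow\,\Phi(x)\quad\text{for every }x\in\R,
\]
which is the claimed convergence.

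The only mildly delicate point is that one is applying the CLT to a \emph{triangular} array since the success probability $p_n$ depends on $n$. I would handle this by either invoking the Lindeberg condition (trivial here because the summands are uniformly bounded and $p_n\to p$ gives a nontrivial limiting variance), or equivalently by quoting Berry--Esseen for binomials, which bounds the Kolmogorov distance between $S_n$ standardized and $Z$ by $O(n^{-1/2})$, uniformly for $p_n$ bounded away from $\{0,1\}$. This is the only step that requires any care; the rest is a bookkeeping reduction via \eqref{eq:identity}.
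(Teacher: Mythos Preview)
Your proof is correct and follows essentially the same approach as the paper: both reduce via \eqref{eq:identity} to the binomial tail $\Pr_{p_n}(S_n\ge np)$ with $p_n=p+x\sqrt{p(1-p)/n}$, standardize, and invoke the Lindeberg--Feller CLT for the resulting triangular array of bounded Bernoulli summands. Your additional order-statistic interpretation and the mention of Berry--Esseen are pleasant supplements but not needed for the argument.
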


Note that the multiplicative scaling is of order $\sqrt{n}$ and coincides with the order of
the total influence at the relevant parameter $p$; hence it is consistent with the heuristic described above.

\begin{proof}
Let $x\in\R$, $a_n=\sqrt{n/[p(1-p)]}$ and $p_n=p+x/a_n$. For large $n$ we have $p_n\in [0,1]$, and
$$
\Pr\big(a_n(T_n-p)\le x\big)\,=\,\Pr(T_n\le p_n)\,=\,\Pr_{p_n}\bigg(\sum_{i=1}^n\omega_i\ge np\bigg).
$$
We of course have a sum of $n$ Bernoulli variables with success probabilities $p_n$.

A consequence of the Lindeberg-Feller central limit theorem (see e.g.\ \cite[Theorem~3.4.5]{durrett10}) 
is that if $\{X_{i,n}:1\le i\le n,n\ge1\}$ is a family of bounded random variables, such that for each 
$n$, $\{X_{i,n}:1\le i\le n\}$ are i.i.d.\ with zero mean and variance that tends to 1 as $n$ increases, 
then $\sum_{i=1}^nX_{i,n}/\sqrt{n}$ converges in distribution to a standard normal.

Since $\Var_{p_n}(\omega_i)=p_n(1-p_n)$, which tends to $p(1-p)$, and $(np-np_n)/\sqrt{np(1-p)}=-x$, the 
above consequence of the Lindeberg-Feller theorem implies that, as $n\to\infty$,
$$
\Pr_{p_n}\bigg(\sum_{i=1}^n\omega_i\ge np\bigg)\;=\;\Pr_{p_n}\bigg(\sum_{i=1}^n\frac{\omega_i-p_n}{\sqrt{np(1-p)}}\ge\frac{np-np_n}{\sqrt{np(1-p)}}\bigg)\;\to\;\Phi(x),
$$
the distribution function of a standard normal distribution.
\end{proof}

\subsection{Tribes, Gumbel and coupon collectors}

The tribes function on $n$ bits is defined as follows. Given $\ell_n$, partition $[n]$ into 
$\lfloor n/\ell_n\rfloor$ sets (`tribes') of length $\ell_n$ (plus some residual bits). 
Then $f_n(\omega)=1$ if and only if $\omega$ is all 1's for at least one tribe. 
The {\em correct} choice for $\ell_n$, in order for the distribution of to be nondegenerate for the uniform measure, is of order $\log_2n-\log_2\log_2n$.

\begin{prop}\label{prop:Tribes}
Consider tribes with $\ell_n=\lfloor\log_2n-\log_2\log_2n\rfloor$, set $\alpha_n=(\log_2n-\log_2\log_2n)/\ell_n$. Then for all $x\in\R$ we have
$$
\lim_{n\to\infty}\Pr\Big(2(\log_2n)\big(T_n-\big(\tfrac{1}{2}\big)^{\alpha_n}\big)\le x\Big)\to 1-\exp(-e^{x}).
$$
\end{prop}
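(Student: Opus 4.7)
The plan is to apply the identity \eqref{eq:identity} and exploit the simple product structure of tribes to reduce the claim to an elementary limit. Set $p_n := (1/2)^{\alpha_n} + x/(2\log_2 n)$, so by \eqref{eq:identity} the target probability equals $\Pr_{p_n}(f_n=1)$. Writing $k_n := \lfloor n/\ell_n\rfloor$ for the number of tribes, the at most $\ell_n-1$ residual bits are irrelevant to $f_n$, and since distinct tribes depend on disjoint coordinates we have
$$
\Pr_{p_n}(f_n = 0) \;=\; (1 - p_n^{\ell_n})^{k_n}.
$$
Thus the statement reduces to showing that this quantity converges to $\exp(-e^x)$.

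The key observation is that $\alpha_n$ has been rigged to eliminate the rounding error coming from the floor in $\ell_n$. Indeed $\alpha_n\ell_n = \log_2 n - \log_2\log_2 n$ exactly, so
$$
(1/2)^{\alpha_n\ell_n} \;=\; \frac{\log_2 n}{n},
$$
and hence
$$
p_n^{\ell_n} \;=\; \frac{\log_2 n}{n}\left(1 + \frac{x\cdot 2^{\alpha_n-1}}{\log_2 n}\right)^{\ell_n}.
$$
The trivial bounds $\ell_n\le\log_2 n - \log_2\log_2 n < \ell_n + 1$ yield $\alpha_n = 1 + O(1/\log_2 n)$ and $\ell_n/\log_2 n \to 1$, so $2^{\alpha_n-1}\to 1$ and the bracketed factor converges to $e^x$. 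Combining this with $k_n = (n/\log_2 n)(1+o(1))$, I obtain $p_n^{\ell_n} \to 0$ and $k_n p_n^{\ell_n}\to e^x$.

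From these two facts the elementary expansion $(1-t)^k = \exp(k\log(1-t))$ immediately gives $(1 - p_n^{\ell_n})^{k_n}\to \exp(-e^x)$, and therefore $\Pr_{p_n}(f_n=1)\to 1 - \exp(-e^x)$, which is exactly the claim. No step is technically hard; the one delicate point is to ensure that the rounding inherent in $\ell_n$ does not distort the leading asymptotics, and this is precisely what the definition of $\alpha_n$ is designed for. Conceptually the whole argument is a Poisson approximation: the number of entirely-1 tribes under $\Pr_{p_n}$ is $\Bin(k_n, p_n^{\ell_n})$, which converges in distribution to $\mathrm{Poisson}(e^x)$, whose value at zero is $\exp(-e^x)$.
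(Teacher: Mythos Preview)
Your proof is correct and follows essentially the same route as the paper: both set $p_n=(1/2)^{\alpha_n}+x/(2\log_2 n)$, use the identity~\eqref{eq:identity}, factor out $(1/2)^{\alpha_n\ell_n}=(\log_2 n)/n$, and verify that $k_n p_n^{\ell_n}\to e^x$. The only cosmetic difference is that the paper phrases the final step via the Poisson convergence theorem for $X_n\sim\Bin(k_n,p_n^{\ell_n})$, whereas you compute $\Pr_{p_n}(f_n=0)=(1-p_n^{\ell_n})^{k_n}$ directly; you even note the Poisson interpretation yourself at the end.
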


Note that the multiplicative scaling is of order $\log_2 n$, which can be checked to be the order of
the total influence at the relevant parameter $1/2$; again, this is consistent with the heuristic described in
the introduction. Note also that the upper tail of this limiting distribution 
decays super-exponentially, whereas the lower tail just decays exponentially.

\begin{proof}
Note that the $\lfloor n/\ell_n\rfloor$ tribes of length $\ell_n$ corresponds to the 1-witnesses for the tribes function. Let $X_n$ denote the number of tribes (1-witnesses) for which $\omega$ is all 1. For $\omega\sim\Pr_p$, we see that $X_n$ is binomially distributed with parameters $\lfloor n/\ell_n\rfloor$ and $p^{\ell_n}$. Clearly, $\lfloor n/\ell_n\rfloor\to\infty$ as $n\to\infty$. Also, for all $x\in\R$ we have
$$
\lfloor n/\ell_n\rfloor\left(\big(\tfrac{1}{2}\big)^{\alpha_n}+\frac{x}{2\log_2n}\right)^{\ell_n}\,=\,\lfloor n/\ell_n\rfloor\big(\tfrac{1}{2}\big)^{\alpha_n\ell_n}\left(1+\frac{x}{2^{1-\alpha_n}\log_2n}\right)^{\ell_n}\,\to\, e^x
$$
as $n\to\infty$.
Given $x\in\R$ and letting $p_n=(\tfrac{1}{2})^{\alpha_n}+\frac{x}{2}\log_2n$, we therefore have,
by the Poisson convergence theorem (see e.g.\ \cite[Theorem~3.6.1]{durrett10}), that
for $\omega\sim\Pr_{p_n}$, $X_n(\omega)$ converges in law to a Poisson distribution with parameter $e^x$.  
Since for each $n$,
$$
\Pr\left(T_n\le\big(\tfrac{1}{2}\big)^{\alpha_n}+\frac{x}{2\log_2n}\right)\,=\,\Pr_{p_n}(X_n\ge1),
$$
we thus conclude that
$$
\lim_{n\to\infty}\Pr\left(T_n\le\big(\tfrac{1}{2}\big)^{\alpha_n}+\frac{x}{2\log_2n}\right)\,=
\,1-\exp(-e^{x}),
$$
as we needed to show.
\end{proof}

\begin{remark}
The unfortunate term $\alpha_n$ arises due to the fact that
$\log_2n-\log_2\log_2n$ is not an integer. A related fact is that if $p=1/2$, then 
the number of tribes which are identically 1 has all Poisson distributions with parameter in $[1,2]$ as
subsequential limits. If we were to restrict outselves to $n$'s of the form $2^{2^k}$, then 
$\log_2n-\log_2\log_2n$ would be an integer and we would have the simpler form that
$$
\lim_{n\to\infty}\Pr\Big(2(\log_2n)\big(T_n-\tfrac{1}{2}\big)\le x\Big)\to 1-\exp(-e^x)
$$
along this thin subsequence of $n$.
\end{remark}

The reader might recognize the limiting distribution obtained in Proposition~\ref{prop:Tribes}. 
In general if $X$ has distribution $F(x)$, then $-X$ has distribution $1-F(-x)$. If $Y$ is distributed
according to the above limiting distribution, then $-Y$ has distribution
$\exp(-e^{-x})$ which is known as the standard Gumbel distribution.
This distribution often arises in extreme value theory and in
particular is the limiting distribution after proper normalization of
\begin{enumerate*}[label=\it\alph*)]
\item the maximum of $n$ independent unit exponential random variables (where
one subtracts $\log n$ but uses no scaling factor to normalize); and
\item the number of picks needed to collect $n$ coupons when each pick is uniform
(where one subtracts $n\log n$ and divides by $n$ to normalize).
\end{enumerate*}
Heuristically, the reason that one gets the same limiting distribution in these two models
is that in the latter case, we have the maximum of $n$ weakly dependent geometric random variables
with parameters $1/n$. When dividing by $n$ (which explains the difference of a factor of $n$ in the two normalizations),
the geometric random variables become unit exponentials in the limit.

While we will not give an alternative proof of Proposition \ref{prop:Tribes} based on these ideas,
we want to {\em explain} why we obtained the limiting distribution there that we did.

Given a Boolean function $f$, define its reversal $\hat{f}$ by $\hat{f}(\omega)=1-f(1-\omega)$ and observe 
that $\hat{f}$ is also a monotone Boolean function. One immediately checks that 
$T(\hat{f})$ and $1-T(f)$ have the same distribution. If $f_n$ is our tribes function, then this
distributional relationship and Proposition \ref{prop:Tribes} easily yields that
\be\label{eq:LimitBackwards}
2(\log_2n)\big(T(\hat{f_n})-1+\big(\tfrac{1}{2}\big)^{\alpha_n}\big)
\ee
converges to the standard Gumbel distribution.
We now give a heuristic for this. Clearly, $\hat{f}_n$ is the function which is 0 if and only if there
is a tribe which is all 0's. (The tribes are 0-witnesses for $\hat{f}_n$.) One easily checks that 
$T(\hat{f_n})$ is the smallest $p$ such that each tribe has a 1 in it with respect to $\eta_p$; compare with~\eqref{eq:WitnessCharacterization}. The distribution of the time
at which a given tribe gets its first 1 is equal to the distribution of the minimum of $\ell_n$ uniform
random variables. The minimum of $k$ uniform random variables after multiplying by $k$ 
converges to a unit exponential. Therefore, since different tribes are disjoint 
(and hence their corresponding uniform random
variables are independent) and have size $\ell_n$, it follows that $\ell_n T(\hat{f_n})$ 
is approximately the maximum of $\lfloor n/\ell_n\rfloor$ unit exponential random variables.
 Therefore one {\em should} have that
$$
\ell_n T(\hat{f_n})-\log (\lfloor n/\ell_n\rfloor)\,
=\,\ell_n \left(T(\hat{f_n})-\frac{\log (\lfloor n/\ell_n\rfloor)}{\ell_n}\right)
$$ 
converges to the Gumbel distribution. This is certainly close to 
(\ref{eq:LimitBackwards}) and heuristically explains the {\em reverse}
Gumbel distributional limit.

\begin{remark}
The so-called circular tribes function is a more symmetric version of tribes and perhaps more natural.
It is defined as follows. We place the $n$ bits in a circle and define $f_n(\omega)$ to be 1 if  
$\omega$ contains an interval of 1's of length $\lfloor\log_2n\rfloor$. One can prove in a similar manner
that the corresponding sequence $T_n$ also has the reverse Gumbel distribution as a limit. 
The situation is however slightly different than for tribes since the number of such intervals 
containing all 1's is no longer Poisson but rather compound Poisson, where the summands are mean 
2 geometric random variables. 
\end{remark}

\subsection{Random graph properties}

In this subsection we cover a few monotone functions related to random graphs. We remind the reader 
that a random graph on $n$ vertices is obtained by declaring each of the possible ${n\choose 2}$ 
edges open with probability $p\in(0,1)$. Equivalently, this amounts to determining an element 
$\omega\in\{0,1\}^{n\choose 2}$ according to $\Pr_p$. We first discuss two functions whose 
critical values occur near 0. The proof of the following proposition is very straightforward 
(when using well known results) and hence we only sketch the proof.

\begin{prop}
\begin{enumerate}[label=\itshape\alph*),topsep=3pt,partopsep=1pt,itemsep=3pt,parsep=1pt]
\item[]
\item Let $f_n$ be the function corresponding to containing a triangle in a graph with $n$ vertices.
Then, for all $x\ge 0$, we have that
$$
\lim_{n\to\infty}\Pr\big(nT_n\le x\big)\,=\, 1-\exp(-x^3\!/6).
$$
\item Let $f_n$ be the function corresponding to a graph with $n$ vertices being connected.
Then, for all $x\in \R$, we have that
$$
\lim_{n\to\infty}\Pr\big(nT_n-\log n\le x\big)\,=\, \exp(-e^{-x}).
$$
\end{enumerate}
\end{prop}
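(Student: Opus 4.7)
The plan for both parts is to apply the identity \eqref{eq:identity} so that the problem reduces to computing $\Pr_{p_n}(f_n=1)$ for the appropriate sequence $p_n$, and then to establish Poisson convergence of a natural counting random variable (triangles in (a), isolated vertices in (b)). Throughout, $\binom{n}{2}$ plays the role of the number of bits.

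For part \emph{a)}, I would fix $x\ge 0$, set $p_n = x/n$, and consider $X_n$, the number of triangles in $G(n,p_n)$. Since $nT_n\le x$ exactly when $\eta_{p_n}$ contains a triangle, \eqref{eq:identity} gives $\Pr(nT_n\le x)=1-\Pr_{p_n}(X_n=0)$. I would show $X_n$ converges in distribution to a Poisson random variable with mean $x^3/6$. The cleanest route is the method of factorial moments: for each fixed $k$, the $k$-th factorial moment $\E_{p_n}[X_n(X_n-1)\cdots(X_n-k+1)]$ counts ordered $k$-tuples of distinct triangles, weighted by $p_n^{e}$ where $e$ is the number of distinct edges used. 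The dominant contribution comes from $k$ edge-disjoint triangles, giving $\binom{n}{3}^k(1+o(1))\cdot(x/n)^{3k}\to(x^3/6)^k$; overlapping configurations use strictly fewer vertices and strictly more than $3k$-many factors of $p_n$ contributed per vertex saved, so they are negligible. This forces $X_n\Rightarrow\text{Poisson}(x^3/6)$, and in particular $\Pr_{p_n}(X_n=0)\to e^{-x^3/6}$.

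For part \emph{b)}, fix $x\in\R$ and set $p_n=(\log n+x)/n$. By \eqref{eq:identity}, $\Pr(nT_n-\log n\le x)=\Pr_{p_n}(G(n,p_n)\text{ is connected})$. Let $Y_n$ denote the number of isolated vertices at $p_n$. Since connectivity implies no isolated vertices,
\[
\Pr_{p_n}(G\text{ connected})\;\le\;\Pr_{p_n}(Y_n=0),
\]
and conversely
\[
\Pr_{p_n}(G\text{ connected})\;\ge\;\Pr_{p_n}(Y_n=0)-\Pr_{p_n}(\exists \text{ component of size }k,\,2\le k\le n/2).
\]
I would first show that $Y_n$ converges in distribution to $\text{Poisson}(e^{-x})$. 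The expected number of isolated vertices equals $n(1-p_n)^{n-1}=ne^{-(n-1)p_n+O(p_n^2 n)}\to e^{-x}$, and factorial moments are handled exactly as in part \emph{a)}: the $k$-th factorial moment equals $n(n-1)\cdots(n-k+1)(1-p_n)^{k(n-1)-\binom{k}{2}}\to e^{-kx}$, yielding the Poisson limit. Hence $\Pr_{p_n}(Y_n=0)\to \exp(-e^{-x})$.

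The main obstacle, as is standard for the connectivity threshold, is the error term: one must show that with probability $1-o(1)$, no component has size between $2$ and $n/2$. This is a routine but delicate union bound. For each $k\in[2,n/2]$, a component of size exactly $k$ requires a spanning tree on $k$ vertices (contributing $k^{k-2}$ choices by Cayley's formula, each of weight $p_n^{k-1}$) and no edge from this set to its complement (probability $(1-p_n)^{k(n-k)}$). Summing $\binom{n}{k}k^{k-2}p_n^{k-1}(1-p_n)^{k(n-k)}$ over $2\le k\le n/2$ and using $p_n\sim (\log n)/n$, one checks that the sum is $o(1)$: the small-$k$ terms are controlled by the factor $(1-p_n)^{kn}\approx n^{-k}$, and the contribution from $k$ growing with $n$ is dominated by the exponent $-k(n-k)p_n$. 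Combining this with the Poisson limit for $Y_n$ yields the claim.
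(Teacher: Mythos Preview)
Your proposal is correct and follows the same route as the paper: reduce via \eqref{eq:identity} to computing $\Pr_{p_n}(f_n=1)$, then invoke Poisson convergence of the triangle count (for \emph{a)}) and the classical connectivity threshold result (for \emph{b)}). The paper simply cites these facts from Bollob\'as~\cite{bollobas01} rather than reproving them, whereas you supply the standard factorial-moment and union-bound details; there is no substantive difference in strategy.
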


The multiplicative scaling $n$ can in both cases be checked to be the order of the total 
influence at the relevant parameter.

\begin{proof}
\emph{a)} It is well known (see e.g.\ \cite[Theorem~4.1]{bollobas01}) that if $p=x/n$, then the number
of triangles contained in the random graph converges to a Poisson distribution with parameter
$x^3/6$. The result follows immediately using~\eqref{eq:identity}.

\emph{b)} It is well known (see e.g.\ \cite[Theorem~7.3]{bollobas01}) that for any $x\in \R$,
if $p=(\log n +x)/n$, the probability that the random graph is connected approaches
$\exp(-e^{-x})$. The result follows.
\end{proof}

\begin{remark}
We see that in part~\emph{a)} of the above proposition the threshold is coarse and the support of the limiting distribution bounded to the left, while in part~\emph{b)} the threshold is sharp and the support unbounded to the left. This in an instance of the general phenomenon that only sharp thresholds may give rise to limiting distributions supported on the whole line.
More precisely, assume that $(b_n)_{n\ge1}$ is bounded away from 1 and that $a_n(T_n-b_n)$ converges, as $n$ tends to infinity, to some probability measure $\mu$. Then a coarse threshold is characterized by the sequence $(a_nb_n)_{n\ge1}$ being bounded above, and if $c$ is an upper bound on this sequence, then the support of $\mu$ is contained in $[-c,\infty)$.
\end{remark}

A \emph{clique} is a maximal complete subgraph of a graph. At a given parameter $p\in(0,1)$, the 
expected number of complete subgraphs of size $\ell$ of a random graph on $n$ vertices falls abruptly 
from being very large to being very small, as $\ell$ increases. As a consequence, the maximal clique 
size of a random graph is highly concentrated, with high probability equal to either of two 
consecutive values $\ell_n-1$ or $\ell_n$, where $\ell_n=\ell_n(p)$. Using Stirling's approximation 
one sees that this sequence must satisfy $\ell_n\sim2\log_{1/p}n$. This is well known; 
see e.g.\ \cite[Chapter~4]{bollobas01}.

We will be interested in the function encoding the existence of a clique of size 
$\ell_n$. For most values of $n$ the maximal sized clique consists of $\ell_n$ vertices with 
probability close to 1. However, along certain subsequences this probability remains bounded away 
from 1. Instead of restricting to subsequences we may allow $p$ to vary, similar to the case of 
tribes. We simply state this result without proof since the argument follows more or less
the argument for tribes. One obtains the result by proving Poisson approximation for the 
number of complete graphs of a given size. While this is more involved than for tribes, it is proved in
\cite[Theorems~11.7 and~11.9]{bollobas01}.

\begin{prop}
Let $p\in(0,1)$ and $\ell_n=\ell_n(p)$ be the above mentioned sequence. 
Let $p_1,p_2,\ldots$ be any sequence bounded away from 0 and 1 such that the limit
$$
\lambda\,:=\,\lim_{n\to\infty}{n\choose\ell_n}p_n^{\ell_n\choose2}\text{ exists in }(0,\infty).
$$
Then, for the Boolean function encoding the existence of a complete graph of size $\ell_n$, we have
$$
\lim_{n\to\infty}\Pr\bigg(\frac{\ell_n^2}{2p_n}\big(T_n-p_n\big)\le x\bigg)\,=\,1-\exp(-\lambda e^x)\quad\text{for }x\in\R.
$$
\end{prop}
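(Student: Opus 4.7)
The strategy mimics the proof of Proposition~\ref{prop:Tribes}: convert the cumulative distribution of $T_n$, via the identity~\eqref{eq:identity}, into a one-minus-void probability for a sum of indicator variables, then establish a Poisson limit for that sum. The key difference is that, unlike the tribes 1-witnesses which are disjoint, the $\ell_n$-cliques overlap heavily, so independence is replaced by the nontrivial Poisson approximation of \cite[Theorems~11.7 and~11.9]{bollobas01}.

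Fix $x\in\R$, and set $q_n:=p_n+\frac{2p_n x}{\ell_n^2}$. Since $(p_n)_{n\ge1}$ is bounded away from 0 and 1 and $\ell_n\to\infty$, we have $q_n\in(0,1)$ for all large $n$. Let $X_n$ denote the number of complete subgraphs of size $\ell_n$ in an Erd\H{o}s--R\'enyi random graph on $n$ vertices with edge probability $q_n$; then $f_n(\omega)=1$ if and only if $X_n(\omega)\ge 1$. By~\eqref{eq:identity},
\[
\Pr\bigg(\frac{\ell_n^2}{2p_n}(T_n-p_n)\le x\bigg)\,=\,\Pr_{q_n}\big(f_n=1\big)\,=\,\Pr_{q_n}(X_n\ge 1).
\]
So it suffices to show that $X_n$ converges in distribution (under $\Pr_{q_n}$) to a Poisson random variable with parameter $\lambda e^x$.

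First I would verify the convergence of the mean. Since $q_n/p_n=1+2x/\ell_n^2$, we have
\[
\bigg(\frac{q_n}{p_n}\bigg)^{\binom{\ell_n}{2}}\,=\,\bigg(1+\frac{2x}{\ell_n^2}\bigg)^{\ell_n(\ell_n-1)/2}\,\longrightarrow\,e^x,
\]
using $\ell_n\to\infty$. Combined with the hypothesis $\binom{n}{\ell_n}p_n^{\binom{\ell_n}{2}}\to\lambda$, this yields
\[
\Eb{q_n}{X_n}\,=\,\binom{n}{\ell_n}q_n^{\binom{\ell_n}{2}}\,=\,\binom{n}{\ell_n}p_n^{\binom{\ell_n}{2}}\cdot\bigg(\frac{q_n}{p_n}\bigg)^{\binom{\ell_n}{2}}\,\longrightarrow\,\lambda e^x.
\]
Note that $q_n$ remains in the regime where $\ell_n(q_n)=\ell_n(p_n)$ (the mean jumps only by the bounded factor $e^x$ while the clique-number threshold depends on $\ell_n$ only through $\ell_n\sim 2\log_{1/p}n$), so we may legitimately apply the clique-counting Poisson approximation at parameter $q_n$ with clique size $\ell_n$.

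Then I would invoke the Poisson approximation for clique counts from Bollob\'as \cite[Theorems~11.7 and~11.9]{bollobas01}: in the regime where $\Eb{q_n}{X_n}\to\mu\in(0,\infty)$ and the clique size $\ell_n\sim 2\log_{1/q_n}n$, the number $X_n$ of $\ell_n$-cliques converges in distribution to a Poisson random variable with parameter $\mu$. With $\mu=\lambda e^x$, this gives
\[
\lim_{n\to\infty}\Pr_{q_n}(X_n\ge 1)\,=\,1-\exp(-\lambda e^x),
\]
which is exactly what we needed. The main obstacle is precisely this Poisson approximation step, which requires controlling pairs of overlapping $\ell_n$-cliques (their shared edges cause the naive second-moment computation to fail unless one carefully estimates the contribution from intersections of each possible size)---but since this is carried out in detail in Bollob\'as, the proof reduces to checking the two input hypotheses (the mean convergence, which we did above, and the scaling $\ell_n\sim 2\log_{1/q_n}n$, which is unchanged from $p_n$ to $q_n$ since $q_n/p_n\to 1$).
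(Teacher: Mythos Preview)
Your proposal is correct and follows exactly the approach the paper indicates (the paper does not give a full proof but explicitly says the argument mirrors that for tribes, with the Poisson approximation for clique counts supplied by \cite[Theorems~11.7 and~11.9]{bollobas01}). Your computation of the mean $\binom{n}{\ell_n}q_n^{\binom{\ell_n}{2}}\to\lambda e^x$ via $(1+2x/\ell_n^2)^{\ell_n(\ell_n-1)/2}\to e^x$ is the only calculation needed beyond invoking Bollob\'as, and it is carried out correctly.
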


\section{Iterated majority}\label{s.IM}

In this section, we will analyze iterated majority and prove Theorem~\ref{thm:m-majority}. In order to understand the asymptotic behavior of iterated majority, one is led to study its recursive structure. 
The limiting distribution will be described through the iterates of some function $g:[0,1]\to[0,1]$, 
and the appropriate scaling will be determined by the derivative of $g$ at $\frac{1}{2}$.

We begin by describing what the limiting distribution $F_m$ will be. Define $g:[0,1]\to[0,1]$ as 
the probability at parameter $x\in[0,1]$ that the majority on $m$ bits equals 1. Formally, $g$ is given by
$$
g(x)\,=\sum_{k= (m+1)/2}^m\binom{m}{k}x^k(1-x)^{m-k}.
$$
Observe that $\gamma(m)$, which will be our scaling coefficient satisfies
\be\label{eq:gamma}
\gamma(m):=\,\frac{m}{m-1}\,\gamma(m-2).
\ee
It is clear that $\gamma(m)$ is increasing in $m$, and Stirling's approximation says that 
$\gamma(m)\sim\sqrt{2m/\pi}$ as $m$ tends to infinity. 
It turns out that the total influence for $f_n$, i.e.\ iterated majority on $m^n$ bits (where $m$ is implicit), is $\gamma^n$ and
we will below see that $\gamma$ coincides with the derivative of $g$ at $\frac{1}{2}$.
The recursive structure of $\gamma(m)$ stated in~\eqref{eq:gamma}
easily yields that $\gamma(m)<\frac{m}{2}$, implying in turn that $\beta(m)>1$ for all $m\ge3$. 
Also, using $\gamma(m)\sim\sqrt{2m/\pi}$, we find that $\beta(m)\to2$ as $m\to\infty$.

It turns out to be convenient to consider the translate
$$
h(x)=g(\tfrac{1}{2}+x)-\tfrac{1}{2}
$$
of $g$. The scaling limit of iterated $m$-majority will be described in terms of the limit as $n\to\infty$ of $h^{(n)}(\alpha\gamma^{-n})$, where $\alpha\in\R$ and $h^{(n)}$ denotes the composition of $h$ with itself $n$ times.

We will break up the proof of the four parts of Theorem~\ref{thm:m-majority} into subsections.

\subsection{Proof of part \emph{a)}}

We begin with the following proposition which will be central for our analysis.

\begin{prop}\label{prop:L}
For every odd integer $m\ge3$ and $\alpha\in\R$, the limit $L(\alpha):=\lim_{n\to\infty}h^{(n)}\big(\alpha\gamma^{-n}\big)$ exists and the 
resulting function $L:\R\to(-\frac{1}{2},\frac{1}{2})$ is odd, onto, 1-Lipschitz 
continuous, strictly increasing and continuously differentiable.
\end{prop}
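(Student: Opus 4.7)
The plan is to define $L$ as the pointwise limit of $L_n(\alpha) := h^{(n)}(\alpha\gamma^{-n})$, establish existence through a Cauchy estimate, derive the conjugacy $L(\gamma\alpha) = h(L(\alpha))$, and read off the remaining properties from this functional equation.

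Two analytic inputs will drive everything. First, the classical identity
\begin{equation*}
g'(x) = m\binom{m-1}{(m-1)/2}x^{(m-1)/2}(1-x)^{(m-1)/2}
\end{equation*}
shows that $g'$ attains its maximum $\gamma$ precisely at $x = 1/2$, whence $|h'(y)| \le \gamma$ on $[-1/2, 1/2]$ with equality only at $y = 0$, and the chain rule yields $(h^{(n)})'(y) \le \gamma^n$. Second, the symmetry $g(1-x) = 1 - g(x)$ makes $h$ odd, so together with $h'(0) = \gamma$ this forces the Taylor expansion $h(y) = \gamma y + O(y^3)$ near $0$. Combining these ingredients via the mean value theorem,
\begin{equation*}
|L_{n+1}(\alpha) - L_n(\alpha)| = \big|h^{(n)}(h(\alpha\gamma^{-n-1})) - h^{(n)}(\alpha\gamma^{-n})\big| \le \gamma^n \cdot C|\alpha|^3\gamma^{-3(n+1)} = O(|\alpha|^3\gamma^{-2n}),
\end{equation*}
which is summable in $n$, locally uniformly in $\alpha$. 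Hence $L := \lim_n L_n$ exists, is continuous, and inherits oddness and the $1$-Lipschitz property from each $L_n$. Passing to the limit in $L_{n+1}(\alpha) = h(L_n(\alpha/\gamma))$ yields the functional equation $L(\gamma\alpha) = h(L(\alpha))$.

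The remaining structural properties all flow from this equation. For the strict inclusion $L(\R) \subseteq (-\tfrac12, \tfrac12)$, since $h^{-1}(\pm 1/2) = \pm 1/2$, any equality $L(\alpha) = \pm 1/2$ would propagate to $L(\alpha\gamma^{-k}) = \pm 1/2$ for every $k \ge 0$, contradicting $L(0) = 0$ together with $1$-Lipschitz continuity. For surjectivity, the equation iterated forward gives $L(\gamma^k \alpha) = h^{(k)}(L(\alpha))$; since $0$ is a repelling fixed point and $\pm 1/2$ are (super-)attracting fixed points of $h$, we get $L(\gamma^k\alpha) \to 1/2$ for any $\alpha$ with $L(\alpha) > 0$, and the intermediate value theorem fills out $(-1/2, 1/2)$. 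Strict monotonicity comes in two stages: the summable bound specialized to small $\alpha$ gives $|L(\alpha) - \alpha| = O(|\alpha|^3)$ on a neighborhood of $0$, so $L$ is strictly increasing near $0$; since $h$ is strictly increasing, the functional equation bootstraps this to all of $\R$.

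The main obstacle is the continuous differentiability. The plan is to show that $L_n'(\alpha) = \prod_{k=0}^{n-1}(h'/\gamma)(h^{(k)}(\alpha\gamma^{-n}))$ converges uniformly on compacts to a continuous positive function, which must then equal $L'$. Each factor lies in $(0, 1]$ and, because $h'$ is even with a strict maximum at $0$, satisfies $(h'/\gamma)(y) = 1 - c_m y^2 + O(y^4)$ near the origin. Convergence of the infinite product reduces to convergence of $\sum_k (1 - (h'/\gamma)(y_k))$ along the orbit $y_k = h^{(k)}(\alpha\gamma^{-n})$. Most iterates remain in the linear regime with $|y_k| \lesssim |\alpha|\gamma^{k-n}$, contributing a dominating geometric sum $\sum_j \gamma^{-2j} < \infty$, while the handful of iterates near $\pm 1/2$ contribute only a bounded amount because $h$ collapses to $\pm 1/2$ super-exponentially (as $h'(\pm 1/2) = 0$). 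Uniformity on compacts then follows by absorbing the $\alpha$-dependence into the same summable bound, and the resulting continuous positive limit is $L'$ by the uniform convergence, also re-confirming strict monotonicity.
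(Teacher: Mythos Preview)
Your proposal is correct and takes a genuinely different route from the paper in two places. For existence, the paper simply notes that $h(y)\le\gamma y$ on $[0,\tfrac12]$ makes $n\mapsto h^{(n)}(\alpha\gamma^{-n})$ monotone, whereas your Cauchy estimate $|L_{n+1}(\alpha)-L_n(\alpha)|=O(|\alpha|^3\gamma^{-2n})$ is more quantitative and, as a bonus, immediately yields $L(\alpha)=\alpha+O(|\alpha|^3)$. For differentiability, the paper works directly from the conjugacy $L(\alpha)=h^{(n)}(L(\alpha\gamma^{-n}))$: it expands $L(\alpha+\delta)-L(\alpha)$ via iterated mean value theorem and squeezes the difference quotient between the Lipschitz bound and the increment lower bound~\eqref{eq:Llower}, arriving at the closed form $L'(\alpha)=\prod_{k\ge1}\gamma^{-1}h'(L(\alpha\gamma^{-k}))$. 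Your approach---showing that $L_n'$ converges locally uniformly---is the more textbook analytic route and, after reindexing $j=n-k$ so that the $j$th factor becomes $(h'/\gamma)(L_{n-j}(\alpha\gamma^{-j}))$, lands on exactly the same infinite product.

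Two minor expository points. The inference ``$|L(\alpha)-\alpha|=O(|\alpha|^3)$, so $L$ is strictly increasing near $0$'' is not immediate by itself; you need to combine it with weak monotonicity and the conjugacy (if $L(a)=L(b)$ then $L(a\gamma^{-k})=L(b\gamma^{-k})$ for all $k$, whence $b-a=O(b^3\gamma^{-2k})\to0$). And the sentence about ``iterates near $\pm\tfrac12$ \ldots\ because $h$ collapses to $\pm\tfrac12$ super-exponentially'' is a red herring: in your product the large-$j$ factors are controlled by the geometric bound $|L_{n-j}(\alpha\gamma^{-j})|\le|\alpha|\gamma^{-j}$, while the finitely many small-$j$ factors converge, for each fixed $j$, to $(h'/\gamma)(L(\alpha\gamma^{-j}))>0$ simply because $L(\alpha\gamma^{-j})\in(-\tfrac12,\tfrac12)$ and $h'>0$ there. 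The super-attracting nature of $\pm\tfrac12$ plays no role.
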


We first need the following lemma.

\begin{lma}\label{lemma:L}
The function $h:[-\frac{1}{2},\frac{1}{2}]\to[-\frac{1}{2},\frac{1}{2}]$ is odd, onto, strictly increasing, strictly convex on $[-\frac{1}{2},0]$ and strictly concave on $[0,\frac{1}{2}]$. In particular $h'(x)\le h'(0)=\gamma(m)$ for $x\in[-\frac{1}{2},\frac{1}{2}]$.
\end{lma}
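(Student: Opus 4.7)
The plan is to reduce every claim about $h$ to an elementary analysis of $g$, then transfer via the translation $h(x)=g(\tfrac12+x)-\tfrac12$. I would first establish oddness and surjectivity. The probabilistic symmetry of the majority function---flipping every bit swaps ``majority equals $1$'' with ``majority equals $0$'' and turns Bernoulli($x$) into Bernoulli($1-x$)---gives $g(1-x)=1-g(x)$; substituting $x\mapsto\tfrac12+x$ yields $h(-x)=-h(x)$. The boundary values $g(0)=0$ and $g(1)=1$ translate into $h(\pm\tfrac12)=\pm\tfrac12$, which combined with continuity and the monotonicity established below gives surjectivity onto $[-\tfrac12,\tfrac12]$.

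The heart of the argument is the closed form
$$
g'(x) \,=\, m\binom{m-1}{(m-1)/2}\bigl[x(1-x)\bigr]^{(m-1)/2}.
$$
I would derive this from the Margulis-Russo formula applied to $m$-majority: bit $i$ is pivotal precisely when the remaining $m-1$ bits contain exactly $(m-1)/2$ ones, contributing influence $\binom{m-1}{(m-1)/2}[x(1-x)]^{(m-1)/2}$, and multiplying by $m$ for the $m$ equivalent bits gives the formula. (Direct differentiation of the polynomial expression for $g$ with telescoping is an equally valid alternative.)

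From this identity every remaining claim follows almost for free. Strict positivity of $g'$ on $(0,1)$ yields strict monotonicity of $g$, and hence of $h$, on $[-\tfrac12,\tfrac12]$. Evaluation at $x=\tfrac12$ gives $h'(0)=g'(\tfrac12)=m\binom{m-1}{(m-1)/2}2^{-(m-1)}=\gamma(m)$. For the convexity/concavity statements, observe that $[x(1-x)]^{(m-1)/2}$ is the $(m-1)/2$-th power of the unimodal concave function $x(1-x)$, and is therefore strictly increasing on $[0,\tfrac12]$ and strictly decreasing on $[\tfrac12,1]$; this monotonicity is inherited by $g'$, so $g$ is strictly convex on $[0,\tfrac12]$ and strictly concave on $[\tfrac12,1]$, which under the shift $x\mapsto\tfrac12+x$ transfers to the stated convexity of $h$ on $[-\tfrac12,0]$ and concavity on $[0,\tfrac12]$.

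Finally, differentiating the oddness identity $h(-x)=-h(x)$ shows that $h'$ is even, and the convexity/concavity just established implies that $h'$ is nondecreasing on $[-\tfrac12,0]$ and nonincreasing on $[0,\tfrac12]$. Together these force $h'$ to attain its global maximum at $0$, yielding the stated bound $h'(x)\le h'(0)=\gamma(m)$. There is no genuine obstacle in this lemma: everything reduces to the closed form for $g'$, which is an elementary consequence of the pivotality structure of majority, and the only real bookkeeping is keeping track of the half-unit shift between $g$ and $h$.
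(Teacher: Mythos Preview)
Your proof is correct and follows the same overall architecture as the paper: transfer everything to $g$, obtain a closed form for $g'(x)$ proportional to $[x(1-x)]^{(m-1)/2}$, and read off monotonicity and convexity/concavity from there. The one substantive difference is how $g'$ is computed. The paper derives it by a direct probabilistic limit argument using the monotone coupling (conditioning on the number of $\xi_i\le x$ and identifying the only surviving term as $\delta\to 0$), arriving at the equivalent form $\tfrac{m+1}{2}\binom{m}{(m-1)/2}[x(1-x)]^{(m-1)/2}$; you instead invoke Margulis--Russo and the pivotality structure of majority, which is shorter and arguably more transparent. The paper then computes $g''$ explicitly to get the convexity statement, whereas you argue it directly from the unimodality of $x(1-x)$; these are of course equivalent. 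Either route is perfectly adequate for this elementary lemma.
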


\begin{proof}
It suffices to demonstrate the corresponding characteristics for $g$. From the interpretation of $g$ as a probability, it is clear that $g$ is strictly increasing, maps $0$, $\frac{1}{2}$ and $1$ to themselves, and that
$$
g(x)\,=\,\Pr\big(\Bin(m,x)\ge m/2\big)\,=\,1-\Pr\big(\Bin(m,x)< m/2\big)\,=\,1-g(1-x).
$$
Thus $h$ is odd, strictly increasing, has fixed points at $-\frac{1}{2}$, $0$ and $\frac{1}{2}$, and is therefore also onto.

We know that $g$ is differentiable and we aim to determine its derivative. Note that
$$
g'(1)\,=\,g'(0)\,=\,\lim_{x\to0}\sum_{k\ge m/2}\binom{m}{k}x^{k-1}(1-x)^{m-k}\,=\,0.
$$
Next, pick $\delta>0$ and let $\xi_1,\xi_2,\ldots,\xi_m$ be independent and $[0,1]$-uniformly distributed. Using the monotone coupling we find that
\bea
g\big(x+\delta(1-x)\big)-g(x)\,=\,\Pr\Big(\#\{\xi_i\le x\}<m/2,\#\{\xi_i\le x+\delta(1-x)\}\ge m/2\Big).
\eea
Conditioning on the number of $\xi_i$'s whose value is at most $x$ we arrive at
$$
\sum_{k\le m/2}\binom{m}{k}x^{k}(1-x)^{m-k}\,\Pr\Big(\#\{\xi_i\le x+\delta(1-x)\}\ge m/2\,\Big|\,\#\{\xi_i\le x\}=k\Big).
$$
The above conditional probability coincides with the probability that a binomial random variable with parameters $m-k$ and $\delta$ is at least $m/2-k$, and is thus independent of $x$. In addition,
$$
\delta^{-1}\,\Pr\big(\Bin(m-k,\delta)\ge m/2-k\big)\,=\sum_{\ell\ge m/2-k}\binom{m-k}{\ell}\delta^{\ell-1}(1-\delta)^{m-k-\ell},
$$
and sending $\delta$ to 0 leaves us with $m-k=(m+1)/2$ in case $k=(m-1)/2$, and $0$ for all smaller values of $k$. In conclusion, for $x\in(0,1)$,
\be\label{eq:gprime}
g'(x)\,=\,\lim_{\delta\to0}\frac{g\big(x+\delta(1-x)\big)-g(x)}{\delta(1-x)}\,=\,\frac{m+1}{2}\binom{m}{\frac{m-1}{2}}\big[x(1-x)\big]^{\frac{m-1}{2}}.
\ee
Differentiating once more gives
$$
g''(x)\,=\,\frac{m+1}{2}\frac{m-1}{2}\binom{m}{\frac{m-1}{2}}\big[x(1-x)\big]^{\frac{m-3}{2}}(1-2x).
$$

In conclusion, the derivative of $g$ is strictly positive on $(0,1)$, and the second derivative is strictly positive on $(0,\frac{1}{2})$ and strictly negative on $(\frac{1}{2},1)$. So $h$ possesses the claimed properties and $h'$ reaches its maximum at the origin, which is easily seen to equal $\gamma(m)$.
\end{proof}

The proof of Proposition~\ref{prop:L} will make repeated use of the properties of $h$ displayed in Lemma~\ref{lemma:L}. For instance, we note that $h$ cannot have any fixed points other than $-\frac{1}{2}$, $0$ and $\frac{1}{2}$.

\begin{proof}[Proof of Proposition~\ref{prop:L}]
Since $h(0)=0$ we also have $L(0)=0$, and since $h$ is odd the limit $L(\alpha)$, 
if it exists, has to be odd as well. In particular, it will be sufficient to consider $\alpha\ge0$ 
for the rest of this proof.

\emph{Existence.} Given $\alpha\ge0$, choose $n_0$ such that $\alpha\gamma^{-n}\le\frac{1}{2}$ for all $n\ge n_0$. Note that we may obtain $h^{(n)}(\alpha\gamma^{-n})$ from $\alpha\gamma^{-(n+1)}$ by first multiplying by $\gamma$, and then applying $h$ $n$ times. $h^{(n+1)}(\alpha\gamma^{-(n+1)})$ is similarly obtained from $\alpha\gamma^{-(n+1)}$ by first applying $h$ once, and then another $n$ times. Lemma~\ref{lemma:L} shows that the derivative of $h$ is bounded by $\gamma$. Hence $\gamma x\ge h(x)$ for all $x\in[0,1/2]$, and it follows that $h^{(n)}(\alpha\gamma^{-n})$ is decreasing in $n$ for $n\ge n_0$. Since the sequence is bounded below by $0$,
the limit $L(\alpha)$ necessarily exists for all $\alpha\ge0$.

\emph{1-Lipschitz Continuity.} Using again that $|h'|\le\gamma$, together with iterated use of the mean value theorem, we find for $\alpha,\alpha'\in\R$ that
\bea
\big|L(\alpha)-L(\alpha')\big|\,=\lim_{n\to\infty}\big|h^{(n)}(\alpha\gamma^{-n})-h^{(n)}(\alpha'\gamma^{-n})\big|\,\le\,\liminf_{n\to\infty}\gamma^n\big|\alpha\gamma^{-n}-\alpha'\gamma^{-n}\big|\,=\,|\alpha-\alpha'|,
\eea
where we also have used that $\alpha\gamma^{-n}$ and $\alpha'\gamma^{-n}$ are contained in $[-\frac{1}{2},\frac{1}{2}]$ for large $n$.

An observation that will be important for the rest of this proof is that, by continuity of $h$, for all $\alpha\in\R$
\be\label{eq:hL}
h\big(L(\alpha)\big)\,=\,\lim_{n\to\infty}h^{(n+1)}(\alpha\gamma^{-n})\,=\,L(\alpha\gamma).
\ee
Iterating this yields that
\be\label{eq:hLIterate}
L(\alpha)\,=\,h\big(L(\alpha\gamma^{-1})\big)\,=\,h^{(n)}\big(L(\alpha\gamma^{-n})\big).
\ee

\emph{Strict Monotonicity.} Note that weak monotonicity of course follows from $h$ being increasing. We will next aim to show that for $\alpha\ge\alpha'\ge0$ sufficiently small, we have
\be\label{eq:Llower}
L(\alpha)-L(\alpha')\,\ge\,(\alpha-\alpha')\prod_{k=1}^\infty\big(1-\alpha\big(\tfrac{3}{4}\big)^{k}\big).
\ee
Apart from showing that $L$ is strictly increasing in a neighborhood around the origin,~\eqref{eq:Llower} will be an important step in the proof of differentiability of $L$. Note that strict monotonicity of $L$ would follow for all $\alpha\in\R$ by~\eqref{eq:hLIterate} and~\eqref{eq:Llower}, since $h$ is strictly increasing.

We now deduce~\eqref{eq:Llower}. Using concavity of $h$ on $[0,\frac{1}{2}]$, we observe that $\frac{4}{3}\le\gamma-\frac{1}{6}\le h'(x)\le\gamma$ on some interval $[0,c]$, where $c=c(m)>0$. So, by the mean value theorem we conclude that
$$
\tfrac{4}{3}x\,\le\, h(x)\,\le\,\gamma x\quad\text{on }[0,c].
$$
Consequently, for all 
$\alpha\in[0,c]$ and $1\le k\le n$, we have $h^{(k)}(\alpha\gamma^{-n})\le\alpha$, and therefore
$$
h^{(k)}(\alpha\gamma^{-n})\,\le\,\tfrac{3}{4}\,h^{(k+1)}(\alpha\gamma^{-n})\,\le\,\big(\tfrac{3}{4}\big)^{n-k}\,h^{(n)}(\alpha\gamma^{-n})\,\le\,\alpha\big(\tfrac{3}{4}\big)^{n-k}.
$$
Now, for any $\alpha'\le\alpha$ in $[0,c]$ and given $n$, we obtain, from iterated use of the mean value theorem, 
the existence of constants $\{s_k^n\}_{1\le k\le n}$ with $s_k^n\in\big[h^{(k-1)}\big(\alpha'\gamma^{-n}\big),h^{(k-1)}\big(\alpha\gamma^{-n}\big)\big]$ and such that
\be \label{eq:FirstNeeded}
h^{(n)}\big(\alpha\gamma^{-n}\big)-h^{(n)}\big(\alpha'\gamma^{-n}\big)\,=\,\big(\alpha\gamma^{-n}-\alpha'\gamma^{-n}\big)\prod_{k=1}^nh'(s_k^n).
\ee
Since $h'$ is decreasing on $[0,\frac{1}{2}]$ and $h''(0)=0$, we have that $h'(x)$ is bounded below by $\gamma(1-x)$ on some, possibly smaller, interval $[0,c']$. As a consequence we obtain the lower bound on~\eqref{eq:FirstNeeded},
\be \label{eq:SecondNeeded}
\begin{aligned}
\big(\alpha\gamma^{-n}-\alpha'\gamma^{-n}\big)\prod_{k=1}^n\gamma\big(1-h^{(k-1)}(\alpha\gamma^{-n})\big)\;&\ge\;(\alpha-\alpha')\prod_{k=1}^n\big(1-\alpha\big(\tfrac{3}{4}\big)^{n-k+1}\big)\\
=\;(\alpha-\alpha')\prod_{k=1}^n\big(1-\alpha\big(\tfrac{3}{4}\big)^{k}\big)
\;&\ge\;(\alpha-\alpha')\prod_{k=1}^\infty\big(1-\alpha\big(\tfrac{3}{4}\big)^{k}\big).
\end{aligned}
\ee
Combining (\ref{eq:FirstNeeded}) and (\ref{eq:SecondNeeded}) and letting $n\to\infty$ yields~\eqref{eq:Llower} for every $\alpha'\le\alpha$ in $[0,c']$.

\emph{Continuous Differentiability.} Using~\eqref{eq:hLIterate} we have for any $\alpha\ge0$ and $\delta\in\R$ that
\begin{equation*}
\begin{aligned}
L(\alpha+\delta)-L(\alpha)\;&=\;h^{(n)}\big(L\big((\alpha+\delta)\gamma^{-n}\big)\big)-h^{(n)}\big(L(\alpha\gamma^{-n})\big)\\
&=\;\Big[L\big((\alpha+\delta)\gamma^{-n}\big)-L(\alpha\gamma^{-n})\Big]\prod_{k=1}^nh'\big(h^{(k-1)}\big(L(\alpha_k\gamma^{-n})\big)\big)
\end{aligned}
\end{equation*}
and where we in the last step have used the mean value theorem iteratively; the $\alpha_k$'s are bounded between $\alpha$ and $\alpha+\delta$. By continuity and monotonicity of $h$ and $L$, these $\alpha_k$'s exist. Using $h^{(k-1)}\big(L(\alpha)\big)=L(\alpha\gamma^{k-1})$, and reindexing the terms of the product, we arrive at
\be\label{eq:diffequal}
\frac{L(\alpha+\delta)-L(\alpha)}{\delta}\,=\,\frac{L\big((\alpha+\delta)\gamma^{-n}\big)-L(\alpha\gamma^{-n})}{\delta\gamma^{-n}}\prod_{k=1}^n\gamma^{-1}h'\big(L(\alpha_{n-k+1}\gamma^{-k})\big).
\ee

We now want to take limits. First, since $L$ is 1-Lipschitz continuous,
$$
\limsup_{\delta\to0}\frac{L(\alpha+\delta)-L(\alpha)}{\delta}\,\le\,\prod_{k=1}^n\gamma^{-1}h'\big(L(\alpha\gamma^{-k})\big),
$$
which is decreasing in $n$. Second, we note that the infinite product in~\eqref{eq:Llower} tends to 1 as $\alpha\to0$. Applying this to the first term in~\eqref{eq:diffequal}, we conclude that for every $\eps>0$, if $n$ is sufficiently large, then
$$
\liminf_{\delta\to0}\frac{L(\alpha+\delta)-L(\alpha)}{\delta}\,\ge\,(1-\eps)\prod_{k=1}^n\gamma^{-1}h'\big(L(\alpha\gamma^{-k})\big).
$$
Sending $n$ to infinity, and then $\eps$ to zero, we conclude that the inferior and superior limits coincide and that
\be\label{eq:Lprime}
L'(\alpha)\,=\,\lim_{\delta\to0}\frac{L(\alpha+\delta)-L(\alpha)}{\delta}\,=\,\prod_{k=1}^\infty\gamma^{-1}h'\big(L(\alpha\gamma^{-k})\big).
\ee
Since $h'\le\gamma$ the limit is finite, and since $L(\alpha\gamma^{-k})\le\alpha\gamma^{-k}$ and $h'(x)\ge\gamma(1-x)$ for small $x\ge0$, the limit is strictly positive for all $\alpha\in\R$. This, again, shows that $L$ is strictly monotone on $\R$.

We need to show that $L'$ is continuous, and note, based on~\eqref{eq:Lprime}, that $L'$ is decreasing on $[0,\infty)$ since $L$ is increasing. Since also $L'>0$ on $\R$ it follows that
$$
\prod_{k=\ell}^\infty\gamma^{-1}h'\big(L(\alpha\gamma^{-k})\big)\to1\quad\text{as }\ell\to\infty
$$
uniformly on compact sets. Thus, for every $\eps>0$
$$
\left|\lim_{x\to\alpha}L'(x)-\prod_{k=1}^\ell\gamma^{-1}h'\big(L(\alpha\gamma^{-k})\big)\right|<\eps
$$
for large enough $\ell$, showing that $\lim_{x\to\alpha}L'(x)=L'(\alpha)$.

\emph{Surjectivity.} Since $L(0)=0$ and $L$ is continuous, it remains to show that $L(\alpha)\to\tfrac{1}{2}$ as $\alpha\to\infty$. For any $k\in\N$ we have from~\eqref{eq:hLIterate} that $L(\alpha\gamma^k)=h^{(k)}\big(L(\alpha)\big)$. Since $L(\alpha)>0$ for $\alpha>0$, the properties of $h$ imply that
$h^{(k)}\big(L(\alpha)\big)\to\frac{1}{2}$ as $k\to\infty$. 
Together with the proven continuity of $L$ and its weak monotonicity,
this shows that $L$ maps $[0,\infty)$ onto $[0,\frac{1}{2})$. 
\end{proof}

We now use the above to analyze the asymptotics of $T_n$ for iterated $m$-majority on $m^n$ bits to prove part~\emph{a)} of Theorem~\ref{thm:m-majority}.
With $x\in\R$ fixed, the goal will be to relate, for large $n$, the probability $\Pr(T_n\le p_n)$, 
where $p_n=\frac{1}{2}+x\gamma^{-n}$, with the function $L:\R\to[-\frac{1}{2},\frac{1}{2}]$ 
defined in Proposition~\ref{prop:L}.

Given $p\in(0,1)$ and $n\in\N$, let
$$
q_n(p)\,:=\,\Pr_{p}\big(f_n(\omega)=1\big).
$$
Using the iterated majority structure, it is easy to express $q_n(p)$ in terms of $q_{n-1}(p)$.
Specifically, $q_n(p)$ is the probability that a binomial random 
variable with parameters $m$ and $q_{n-1}(p)$ is at least $m/2$. That is,
$$
q_n(p)\,=\,\Pr\big(\textup{Bin}(m,q_{n-1}(p))\ge m/2\big)\,=\,g\big(q_{n-1}(p)\big),
$$
where $g$ is defined as above. Of course, the base case $q_0(p)$ equals the probability of the majority function on one bit being $1$, which equals $p$. Iterating the above procedure we get $q_n(p)=g^{(n)}(p)$. Replacing $p$ by $p_n=\frac{1}{2}+x\gamma^{-n}$ leaves
$$
\Pr(T_n\le p_n)\,=\,\Pr_{p_n}\big(f_n(\omega)=1\big)\,=\,q_n(p_n)\,=\,g^{(n)}\big(\tfrac{1}{2}+x\gamma^{-n}\big)\,=\,\tfrac{1}{2}+h^{(n)}(x\gamma^{-n}),
$$
which, according to Proposition~\ref{prop:L}, converges as $n\to\infty$ to $\frac{1}{2}+L(x)$.

That the limiting distribution is absolutely continuous and fully supported on $\R$ is a consequence 
of the properties of $L$ established in Proposition~\ref{prop:L}. The final sentence of part~\emph{a)} follows
from parts~\emph{b)} and~\emph{c)}.

\begin{remark}
Observe that the first equality in (\ref{eq:hLIterate}) has a very nice interpretation.
It says that the two homeomorphisms $x\mapsto \gamma x$ from $\R$ to itself and
$x\mapsto h(x)$ from $[-1/2,1/2]$ to itself are conjugate and that $L$ provides a conjugation between
these homeomorphisms (showing that they are conjugate). We know that $L$ is also a
homeomorphism. Since the two conjugate mappings are analytic, one might guess that one can show
that $L$ has good properties (such as analyticity or being continuously differentiable) by virtue of
the fact it is a conjugacy between these systems. This is unfortunately not true. One can construct
self-conjugacies $f$ of $x\mapsto \gamma x$ which, while being homeomorphisms,
are not continuously differentiable. Being a self-conjugacy amounts to saying that
$f(\gamma x)=\gamma f(x)$ and so in particular we are saying that
$f(\gamma x)=\gamma f(x)$ does not imply that $f$ is linear even for homeomorphisms.
Since $L\circ f$ would also be a conjugacy between the two systems, one cannot conclude good properties
of $L$ using only the fact that it is a conjugacy.
\end{remark}

\subsection{Proof of part \emph{b)}}

We first need the following lemma.

\begin{lma}\label{lma:hDetailed}
For every odd integer $m\ge3$ there exists $\eps_0=\eps_0(m)>0$ such that for all $\ell\ge 1$ and $\eps \in (0,\eps_0)$,
$$
\eps^{(\frac{m+1}{2})^\ell}\,
\le\, \left|h^{(\ell)}\big(\tfrac{1}{2}-\eps\big)-\tfrac{1}{2}\right|\,\le\, (9\eps)^{(\frac{m+1}{2})^\ell}.
$$
\end{lma}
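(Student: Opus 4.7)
The plan is to rewrite the iteration of $h$ as an iteration of $g$ near zero, and then exploit the leading-order behaviour $g(x)\sim\binom{m}{s}x^s$, where $s:=\frac{m+1}{2}$. The symmetry $g(x)=1-g(1-x)$ established in the proof of Lemma~\ref{lemma:L} translates to $h(\tfrac12-t)=\tfrac12-g(t)$ for $t\in[0,\tfrac12]$. Setting $a_\ell:=\tfrac12-h^{(\ell)}(\tfrac12-\eps)$, the orbit stays in $[0,\tfrac12]$ by monotonicity of $h$, and we obtain the scalar recursion $a_0=\eps$, $a_\ell=g(a_{\ell-1})$. Both inequalities of the lemma then reduce to showing $\eps^{s^\ell}\le a_\ell\le(9\eps)^{s^\ell}$ for this recursion.

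Next I would establish matching polynomial bounds on $g$ near the origin. Bounding $(1-x)^{m-k}\le 1$ and using that for odd $m$ the tail $\sum_{k\ge s}\binom{m}{k}$ equals $2^{m-1}$ yields the upper bound $g(x)\le 2^{m-1}x^s$ on $[0,1]$. For the lower bound only the leading term is needed: $g(x)\ge\binom{m}{s}(1-x)^{s-1}x^s$. Since $\binom{m}{s}\ge 3$ for $m\ge 3$, one can pick $\eps_0=\eps_0(m)>0$ small enough that $\binom{m}{s}(1-\eps_0)^{s-1}\ge 1$, which gives $g(x)\ge x^s$ on $[0,\eps_0]$. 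Shrinking $\eps_0$ further so that $2^{m-1}\eps_0^{s-1}\le 1$ forces $a_\ell\le a_{\ell-1}$ and therefore keeps the whole orbit inside $[0,\eps_0]$.

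With these two estimates in hand, both inequalities follow by straightforward induction. The lower bound gives $a_\ell\ge a_{\ell-1}^s\ge\eps^{s^\ell}$. The upper bound unfolds to
$a_\ell\le 2^{(m-1)(1+s+\cdots+s^{\ell-1})}\eps^{s^\ell}=2^{(m-1)(s^\ell-1)/(s-1)}\eps^{s^\ell}$, and since $(m-1)/(s-1)=2$ this simplifies to $4^{s^\ell-1}\eps^{s^\ell}\le(4\eps)^{s^\ell}\le(9\eps)^{s^\ell}$. The main obstacle is the bookkeeping around $\eps_0$: it must simultaneously secure the lower inequality $g(x)\ge x^s$ and the monotonicity $a_\ell\le a_{\ell-1}$, so that the orbit never leaves the region where the two polynomial bounds on $g$ are valid. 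The generous constant $9$ on the right-hand side of the lemma exists precisely so that the geometric sum $1+s+\cdots+s^{\ell-1}$ in the exponent can be absorbed without having to fight for the sharp constant.
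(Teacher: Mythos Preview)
Your argument is correct and follows essentially the same route as the paper: both reduce the claim to the recursion $a_\ell=g(a_{\ell-1})$ via the symmetry $h(\tfrac12-t)=\tfrac12-g(t)$, sandwich $g(x)$ between $x^s$ and $Cx^s$ near the origin, and finish by induction. Your global bound $g(x)\le 2^{m-1}x^s$ (in place of the paper's asymptotic $g(\eps)\le(9\eps)^s/9$ obtained from $\binom{m}{s}\le 9^{s-1}$) together with the geometric-sum bookkeeping even yields the slightly sharper upper bound $(4\eps)^{s^\ell}$.
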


\begin{proof}
First note that
$$
\tfrac{1}{2}-h\big(\tfrac{1}{2}-\eps\big)\,=\,1-g(1-\eps)\,=\,g(\eps),
$$
which after iteration leaves us with $\frac{1}{2}-h^{(\ell)}(\frac{1}{2}-\eps)=g^{(\ell)}(\eps)$.
In addition, each term in $g$ with non-zero coefficient has degree at least $(m+1)/2$. Thus,
$$
g(x)=\binom{m}{\frac{m+1}{2}}x^\frac{m+1}{2}\big(1+r(x)\big)
$$
for some polynomial $r(x)\to0$ as $x\to0$. Since
$$
\binom{m}{\frac{m+1}{2}}\,=\,\binom{m}{\frac{m-1}{2}}\,\le\,\bigg(\frac{2em}{m-1}\bigg)^\frac{m-1}{2}\le\,9^{\frac{m-1}{2}},
$$
we see that $g(\eps)$ lies in $\big[\eps^\frac{m+1}{2},(9\eps)^\frac{m+1}{2}/9\big]$ for all sufficiently small $\eps$. Using the fact that $g$ is increasing and $g(x)\le x$ on $(0,\frac{1}{2})$, it then follows by two inductions
that $\eps^{(\frac{m+1}{2})^\ell}$ and $(9\eps)^{(\frac{m+1}{2})^\ell}/9$ are lower respectively upper bounds for $g^{(\ell)}(\eps)$.
\end{proof}

Now, fix $m\ge3$, and let $\eps_0=\eps_0(m)>0$ be given as in Lemma~\ref{lma:hDetailed}. We first show the 
second inequality. Since $L$ approaches $\frac{1}{2}$ continuously, we
can choose $a_0>0$ so that $L(a_0) =\frac{1}{2}-\frac{\eps_0}{9}$. Given $x\ge 1$, 
let $n_x:=\lfloor \log_{\gamma}\frac{x}{a_0}\rfloor$. We first
restrict to $x$'s which are sufficiently large so that $n_x\ge 1$. This immediately yields 
$$
a_0\le x\gamma^{-n_x}.
$$
Using (\ref{eq:hLIterate}) and monotonicity of $L$ and $h$, we have
$$
L(x)\,=\,h^{(n_x)}\big(L(x\gamma^{-n_x})\big)\,\ge\,
h^{(n_x)}\big(L(a_0)\big).
$$
By Lemma~\ref{lma:hDetailed} and the definition of $a_0$, 
we have that
$$
\Pr(W_m\ge x)\,=\,\tfrac{1}{2}-L(x)\,\le\,\tfrac{1}{2}-h^{(n_x)}\big(L(a_0)\big)\,=\, \tfrac{1}{2}-h^{(n_x)}\big(\tfrac{1}{2}-\tfrac{\eps_0}{9}\big)\,\le\, \eps_0^{(\frac{m+1}{2})^{n_x}}.
$$
An easy computation shows that, for all $x$ for which
$n_x\ge 1$ 
$$
\left(\frac{m+1}{2}\right)^{n_x}\ge\,\frac{2}{m+1}\left(\frac{x}{a_0}\right)^{\beta(m)}.
$$
From this, the upper bound follows with $c_2=-\frac{2}{m+1}a_0^{-\beta(m)}\log\eps_0$ for all large $x$. By decreasing $c_2$ if necessary, one can of course get the desired inequality for all $x\ge1$.

We now move to the lower bound. This time, choose $a_0>0$ so that $L(a_0) =\frac{1}{2}-\frac{\eps_0}{2}$, and given $x\ge 1$, 
let $n_x:=\lceil \log_{\gamma}\frac{x}{a_0}\rceil$. We again
restrict to large $x$ for which $n_x\ge 1$. This immediately yields 
$$
a_0\ge x\gamma^{-n_x}.
$$
Using (\ref{eq:hLIterate}) and monotonicity of $L$ and $h$, we have
$$
L(x)\,=\,h^{(n_x)}\big(L(x\gamma^{-n_x})\big)\,\le\, h^{(n_x)}\big(L(a_0)\big).
$$
By Lemma \ref{lma:hDetailed}, we have that
$$
\Pr(W_m\ge x)\,=\, \tfrac{1}{2}-L(x)\,\ge\, \tfrac{1}{2}-h^{(n_x)}\big(L(a_0)\big)\,=\,
\tfrac{1}{2}-h^{(n_x)}\big(\tfrac{1}{2}-\tfrac{\eps_0}{2}\big)\,\ge\, \big(\tfrac{\eps_0}{2}\big)^{(\frac{m+1}{2})^{n_x}}.
$$
An easy computation shows that one has, for all $x$ for which
$n_x\ge 1$, that
$$
\left(\frac{m+1}{2}\right)^{n_x}\le\, \frac{m+1}{2} \left(\frac{x}{a_0}\right)^{\beta(m)}.
$$
From this, the lower bound follows for some $c_1$ for all large $x$, and 
by increasing $c_1$ if necessary, one can of course get the desired inequality for all $x\ge1$. This proves part~\emph{b)}.

\subsection{Proof of part~\emph{c)}}

We set out to show that $\beta(m)$ is strictly increasing. Since
\be\label{eq:betadiff}
\beta(m+2)-\beta(m)\,=\,\frac{\log\frac{m+3}{2}\log\gamma(m)-\log\frac{m+1}{2}\log\gamma(m+2)}{\log\gamma(m)\log\gamma(m+2)},
\ee
it will suffice to show that the numerator in the right-hand side of~\eqref{eq:betadiff} is strictly positive. Using the recursive structure of $\gamma(m)$, i.e., that $\gamma(m+2)=\frac{m+2}{m+1}\,\gamma(m)$, we aim to show that
$$
\log\gamma(m)\log\frac{m+3}{m+1}-\log\frac{m+1}{2}\log\frac{m+2}{m+1}\,>\,0.
$$
For $x\ge0$ a Taylor estimate for $\log(1+x)$ gives the lower and upper bounds $x-\frac{x^2}{2}$ and $x$, respectively. A lower bound on the numerator in the right-hand side of~\eqref{eq:betadiff} is thus given by
\be\label{eq:betalower}
\log\gamma(m)\bigg[\frac{2}{m+1}-\frac{2}{(m+1)^2}\bigg]-\frac{1}{m+1}\log\frac{m+1}{2}.
\ee
A lower bound on $\gamma(m)$ can be obtained from known bounds on the central binomial coefficient. For instance, Wallis' product formula states that $a_n:=\prod_{k=1}^n\frac{2k}{2k-1}\frac{2k}{2k+1}$ converges to $\frac{\pi}{2}$ as $n\to\infty$.
Since $a_n$ is increasing we have $a_n\le\frac{\pi}{2}$ for all $n\ge1$,
leading to the bound
$$
\binom{2n}{n}\,\ge\, 4^n\sqrt{\frac{2}{\pi(2n+1)}}.
$$
Consequently $\gamma(m)\ge\sqrt{2m/\pi}$ for all $m\ge3$. After multiplication by $m+1$, a lower bound on the expression in~\eqref{eq:betalower} is given by
$$
\log\frac{2m}{\pi}\bigg[1-\frac{1}{m+1}\bigg]-\log\frac{m+1}{2}\,=\,\log\frac{4}{\pi}-\log\frac{m+1}{m}-\frac{1}{m+1}\log\frac{2m}{\pi}.
$$
One may check that the latter expression is increasing in $m$ and positive for $m=13$. Using the slightly sharper lower bound $\frac{4^n}{\sqrt{\pi n}}(1-\frac{1}{8n})$ on the central binomial coefficient, obtained from the Stirling series, one may arrive at an alternative lower bound on the difference in~\eqref{eq:betadiff}, which is positive for all $m\ge5$. In either case, one further checks that $\beta(m)<\beta(m+2)$ for the remaining values of $m$ by hand, so that $\beta(m)$ is strictly increasing for all $m\ge3$.

\subsection{Proof of part~\emph{d)}}
Since $m$ will now be changing, it is natural to now write $L_m$ instead of $L$. 
The distribution given by $F_m(x)=\frac{1}{2}+L_m(x)$ has density $L_m'(x)$. 
We will show that $\lim_{m\to\infty}L_m'(x)=e^{-\pi x^2}$, which we recognize as the 
density of a centered normal distribution with variance $1/(2\pi)$. 
By virtue of Scheff\'e's theorem (see e.g.\ \cite{durrett10}), pointwise 
convergence of densities implies the desired weak convergence of $F_m$ to a normal distribution.
By symmetry, it suffices to prove this for $x\ge 0$ which we now assume to be the case.

We first show that $L_m'(x\gamma^{-1})\to1$ as $m\to\infty$, for every $x\ge 0$. First recall that 
by Proposition~\ref{prop:L}, we have that
$L_m'(x)\le1$ for all $x\ge 0$. Using~\eqref{eq:Lprime} and~\eqref{eq:gprime} and then
that $L_m'\le1$, we find that
$$
L_m'(x\gamma^{-1})\,=\,\prod_{k=1}^\infty\left[1-4[L_m(x\gamma^{-k-1})]^2\right]^{\frac{m-1}{2}}\,\ge\,\prod_{k=1}^\infty\left[1-4x^2\gamma^{-2(k+1)}\right]^{\frac{m-1}{2}}.
$$
Since $e^{-2y}\le 1-y$ for small positive $y$, 
replacing the terms of the product by an exponential, we obtain that for all large $m$
\bea
\begin{aligned}
L_m'(x\gamma^{-1})\;&\ge\;\prod_{k=1}^\infty \exp\left(-4x^2(m-1)\gamma^{-2(k+1)}\right)\;=\;\exp\left(-4x^2(m-1)\sum_{k=1}^\infty\gamma^{-2(k+1)}\right)\\
&\ge\;\exp\left(-8x^2m\gamma^{-4}\right),
\end{aligned}
\eea
where we in the last step have used that $\gamma^2\ge2$. Since $\gamma=\gamma(m)$ increases at the rate of $\sqrt{m}$, we may conclude that $L_m'(x\gamma^{-1})\to1$ as $m\to\infty$. Moreover, the convergence is uniform in $x$ over compact sets. Consequently, for every $\eps>0$ and $x\ge 0$ 
there is $m_0$ such that $L_m(x\gamma^{-1})\ge(1-\eps)x\gamma^{-1}$ for all $m\ge m_0$.

Second, again using~\eqref{eq:Lprime} and~\eqref{eq:gprime}, or differentiating~\eqref{eq:hLIterate}, we arrive at
$$
L_m'(x)\,=\,L_m'(x\gamma^{-1})\left[1-4[L_m(x\gamma^{-1})]^2\right]^{\frac{m-1}{2}}.
$$
Together with our previous conclusions we find that
$$
L_m'(x\gamma^{-1})\left[1-4x^2\gamma^{-2}\right]^\frac{m-1}{2}\,\le\,L_m'(x)\,\le\,L_m'(x\gamma^{-1})\left[1-(1-\eps)^24x^2\gamma^{-2}\right]^\frac{m-1}{2}.
$$
Taking limits, first as $m\to\infty$ and then as $\eps\to0$, leaves us with
$$
\lim_{m\to\infty}L_m'(x)=e^{-\pi x^2},
$$
as required.

\section{All measures are distributional limits}\label{s.DistLimits}

In this section, we prove Theorem~\ref{thm:newLIMITS} and Propositions~\ref{prop:newNoLIMITS} and~\ref{prop:NoSubseq}.

\begin{proof}[Proof of Theorem~\ref{thm:newLIMITS}]
The main part of the proof will be to, in both settings~\emph{a)} and~\emph{b)}, prove the result for a restricted class of probability measures, namely those $\mu$ of the form $\sum_{i=1}^k q_i \delta_{x_i}$, i.e., having finite support.
To see why this will suffice in order to obtain the general result, we first state a simple lemma, whose proof is left to the reader, 
concerning metric spaces. Assume, in a metric space, we are given $x_m$ converging to $x_\infty$ and for 
each $m$, we have $x_{m,n}$ converging to $x_m$ as $n\to\infty$. Then there is a sequence $m_n$ (not 
necessarily strictly increasing) so that we have that $x_{m_n,n}$ converges to $x_\infty$ as $n\to\infty$. 

We note that it is well known that convergence in distribution is metrizable. 
Assume now that we are given an arbitrary probability measure $\mu$ and a sequence $(a_n)_{n\ge1}$
satisfying the stated properties. It is clear we can find a sequence $(\mu_m)_{m\ge1}$, each with finite support as above,
converging to $\mu$. Assume that, for each $m$, we can find a sequence of
Boolean functions $(f_{m,n})_{n\ge1}$ such that $f_{m,n}$ is defined on $n$ bits and 
$a_n(T(f_{m,n})-\frac{1}{2})$ approaches, as $n\to\infty$, $\mu_m$ in distribution. 
By the above general metric space result, there exists a sequence $(m_n)_{n\ge1}$ (not necessarily strictly increasing)
so that $a_{n}(T(f_{m_n,n})-\frac{1}{2})$ approaches $\mu$ in distribution, as $n\to\infty$. This shows that it will suffice to prove parts~\emph{a)} and~\emph{b)} for measures $\mu$ having finite support.

\emph{Proof of part~a)}.
Assume that a probability measure $\mu$ of the form $\sum_{i=1}^k q_i \delta_{x_i}$ is given and let 
$(a_n)_{n\ge1}$ satisfy $1\ll a_n\ll\sqrt{n}$. We may assume that $x_1<x_2<\ldots<x_k$ and that the 
$q_i$'s are all positive.
For each $i=1,2,\ldots,k$ fix $y_i\in\R$ such that
$$
1-\Phi(y_i)\,=\,q_1+q_2+\ldots+q_i,
$$ 
where $\Phi(\cdot)$ denotes the distribution function of the standard Gaussian. (Of course, this 
defines $y_k$ to be $-\infty$, but we allow this slight abuse of notation.)

Now let $E_i$ denote the event that the proportion of 1's among the $n$ bits is at least 
$\frac{1}{2}+x_i/a_n$, and let $F_i$ denote the event that the proportion of 1's among the first $\lfloor a_n\rfloor$ bits is at least $\frac{1}{2}+y_i/(2\sqrt{a_n})$.
Although not explicit in the notation, these events depend on $n$.
Notice further that the events are defined so that $E_1\supseteq E_2\supseteq\ldots\supseteq E_k$ and $F_1\subseteq F_2\subseteq\ldots\subseteq F_k$ hold.
Finally, we define $f_n$ as the indicator function of the event $\bigcup_{i=1}^k(E_i\cap F_i)$.

Let $p_n=\frac{1}{2}+x/a_n$. To complete the proof of part~\emph{a)} we need to verify that
$$
\Pr\big(a_n(T_n-\tfrac{1}{2})\le x\big)\,=\,\Pr_{p_n}\big(E_i\cap F_i\text{ for some }i\big)
$$
tends to $0$, $\sum_{i=1}^jq_i$ or $1$, depending on whether $x<x_1$, $x\in(x_j,x_{j+1})$  and $j=1,2,\ldots,k-1$, or $x>x_k$. We first examine the events $E_i$ and $F_i$. Appealing to the Lindeberg-Feller central limit theorem, or Chebyshev's inequality, we find that
\be\label{eq:Eprobab3}
\Pr_{p_n}(E_i)\,=\,\Pr_{p_n}\bigg(\frac{1}{n}\sum_{j\in[n]}\omega_j\ge\frac{1}{2}+\frac{x_i}{a_n}\bigg)\,\to\,\left\{
\begin{aligned}
0 && x<x_i,\\
1 && x>x_i,
\end{aligned}
\right.
\ee
as by assumption $a_n\ll\sqrt{n}$. Moreover, as $a_n\gg1$, using Lindeberg-Feller, for any $x\in\R$
\be\label{eq:Fprobab3}
\Pr_{p_n}(F_i)\,=\,\Pr_{p_n}\bigg(\frac{1}{\lfloor a_n\rfloor}\sum_{j=1}^{\lfloor a_n\rfloor}\omega_j\ge\frac{1}{2}+\frac{y_i}{2\sqrt{a_n}}\bigg)\,\to\,1-\Phi(y_i).
\ee
(The above abuse of notation is here manifested in that $F_k$ equals the whole sample space.)

Since $\Pr_{p_n}\big(E_i\cap F_i\text{ for some }i\big)$ is at most $\Pr_{p_n}\big(E_i\text{ for some }i\big)$, the case $x<x_1$ is immediate from~\eqref{eq:Eprobab3}. Similarly, as $F_k$ equals the whole sample space, $\Pr_{p_n}(E_k)$ gives a lower bound on $\Pr_{p_n}\big(E_i\cap F_i\text{ for some }i\big)$, so also the case $x>x_k$ is immediate from~\eqref{eq:Eprobab3}. For $j=1,2,\ldots,k-1$ and $x\in(x_j,x_{j+1})$, using~\eqref{eq:Eprobab3},~\eqref{eq:Fprobab3} and the fact that $F_1\subseteq F_2\subseteq\ldots\subseteq F_k$, gives
\be\label{eq:planedlimit}
\lim_{n\to\infty}\Pr_{p_n}\big(E_i\cap F_i\text{ for some }i\big)\,=\,
\lim_{n\to\infty}\Pr_{p_n}(F_j)\,=\,q_1+q_2+\ldots+q_j,
\ee
due to the definition of the $y_i$'s, as required.

\emph{Proof of part~b)}.
Assume again that a probability measure $\mu$ of the form $\sum_{i=1}^kq_i\delta_{x_i}$ is given and let 
$(a_n)_{n\ge1}$ satisfy $\log n\ll a_n\ll\sqrt{n}$. We may assume that $x_1<x_2<\ldots<x_k$ and that the 
$q_i$'s are all positive. As before, let $E_i$ denote the event that the proportion of 1's among the $n$ bits is at least $\frac{1}{2}+x_i/a_n$.

Let $\ell_n=\lfloor2\log_2n\rfloor$. In order to define events $F_i$ we extend the partial ordering on binary strings of length $\ell_n$, in which $y\ge y'$ if $y$ dominates $y'$ coordinate-wise, to a total ordering. (One such ordering is inherited from the set of integers through their binary representation.)
Consider the $n$ bits of $\omega$ positioned in a circle and, for each string $y\in\{0,1\}^{\ell_n}$, let $F(y)$ denote the event that $\omega$ contains an interval of length $\ell_n$ on which $\omega$ is at least as large as $y$ with respect to the total ordering.
We claim that it is possible to choose $y_1^n\ge y_2^n\ge\ldots\ge y_k^n$ in $\{0,1\}^{\ell_n}$ such that $F_i:=F(y_i^n)$, for every $i=1,2,\ldots,k$, $x\in\R$ and with $p_n=\frac{1}{2}+x/a_n$, satisfies
\be\label{eq:y-claim}
\lim_{n\to\infty}\Pr_{p_n}(F_i)\,=\,q_1+q_2+\ldots+q_i.
\ee
These definitions guarantee that $E_1\supseteq E_2\supseteq\ldots\supseteq E_k$ and $F_1\subseteq F_2\subseteq\ldots\subseteq F_k$ hold also here.

Assuming that such a selection of $y_i^n$'s can be made, we define $f_n$ as the indicator function of the event $\bigcup_{i=1}^k(E_i\cap F_i)$. The resulting function is monotone (with respect to the partial ordering on $\{0,1\}^n$) and invariant with respect to rotations of the bits and therefore transitive.

Before proving the claim we argue for why this would complete the proof of part~\emph{b)}. As in part~\emph{a)}, to complete the proof we need to verify that
$$
\Pr\big(a_n(T_n-\tfrac12)\le x\big)\,=\,\Pr_{p_n}\big(E_i\cap F_i\text{ for some }i\big)
$$
tends to $0$, $\sum_{i=1}^jq_i$ or $1$, depending on whether $x<x_1$, $x\in(x_j,x_{j+1})$ and $j=1,2,\ldots,k-1$, or $x>x_k$. As the events $E_i$ are defined just as above,
and the monotone structure $F_1\subseteq F_2\subseteq\ldots\subseteq F_k$ holds also here, these conclusions would follow from the claim as in part~\emph{a)}.

It remains to prove the claim. We first show that for every $n\ge1$ and $q\in[0,1]$ there exists $y\in\{0,1\}^{\ell_n}$ such that $\big|\Pr_{1/2}\big(F(y)\big)-q\big|\le2/n$. Let $y_1=\max\{y:\Pr_{1/2}\big(F(y)\big)\ge q\}$ and denote its successor in the total ordering by $y_2$, should it exist (otherwise set $F(y_2)=\emptyset$). Then $\Pr_{1/2}\big(F(y_2)\big)<q$ and we have
$$
0\,\le\,\Pr_{1/2}\big(F(y_1)\big)-\Pr_{1/2}\big(F(y_2)\big)\,\le\,\Pr_{1/2}\big(\text{some interval of $\omega$ equals }y_1\big)\,\le\, n2^{-\ell_n}\,\le\, 2/n,
$$
where the second-to-last inequality comes from a first moment estimate. In particular, for $y=y_1$ we obtain $\big|\Pr_{1/2}\big(F(y)\big)-q\big|\le2/n$.

We next show that if some (sequence of) $y\in\{0,1\}^{\ell_n}$ satisfies $\Pr_{1/2}\big(F(y)\big)\to q$ as $n\to\infty$, then also $\Pr_{p_n}\big(F(y)\big)\to q$. Recall the monotone coupling $(\eta_p)_{p\in[0,1]}$ of elements in $\{0,1\}^n$. Assume that $x\ge0$ so that $p_n\ge1/2$, and let $A$ denote the event that $\eta_{1/2}$ contains no interval on which it is at least as large as $y$ (in the total ordering) but that $\eta_{p_n}$ does.
Then,
\be\label{eq:F(y)-bound}
\Pr_{p_n}\big(F(y)\big)-\Pr_{1/2}\big(F(y)\big)\,=\,\Pr(A)\,=\,\E\big[\Pr(A|\eta_{p_n})\big]\,\le\,\ell_n(p_n-\tfrac12)/p_n\,\le\,2x\ell_n/a_n,
\ee
since $\Pr(A|\eta_{p_n})$ either equals zero, in the case $\eta_{p_n}$ does not contain an interval on which it is at least as large as $y$, or is bounded by $\ell_n(p_n-\frac12)/p_n$, since if such an interval exists then at least one bit in this interval (or the first in some ordering, if there are several intervals with this property) must have changed its value as $p$ ranges from $\tfrac12$ to $p_n$. Since $a_n\gg\log n$, the upper bound in~\eqref{eq:F(y)-bound} tends to zero as $n\to\infty$, as claimed. The case $x<0$ is analogous.

This proves the claim that we may choose $y_i^n$'s such that~\eqref{eq:y-claim} holds for all $x\in\R$, and thus completes the proof of part~\emph{b)}.
\end{proof}

\begin{remark}
In order to obtain a sequence $(f_n)_{n\ge1}$ of monotone graph properties on $n$ vertices and ${n\choose 2}$ edges (which are the variables here), with the property described in the remark following Theorem~\ref{thm:newLIMITS}, only minor modifications to the above construction are necessary: Define the $E_i$'s as before, but now on ${n\choose 2}$ bits. Let $\ell_n=\lfloor4\log_2n\rfloor$, say, and extend the usual partial ordering on the set of unlabled graphs on $\ell_n$ vertices to a total ordering. Given an unlabled graph $y$ on $\ell_n$ vertices, define $F(y)$ as the event that $\omega$ contains an induced subgraph at least as large as $y$ with respect to the total ordering. The calculations needed to verify that one may choose graphs $y_i^n$ so that~\eqref{eq:y-claim} holds also in this setting are straightforward.
\end{remark}

We continue investigating the possible behavior of the scaling coefficients.

\begin{proof}[Proof of  Proposition~\ref{prop:newNoLIMITS}]
Let $\mu$ be a non-degenerate probability measure and $(f_n)_{n\ge1}$ some sequence of monotone Boolean functions for which $a_n(T_n-b_n)$ approaches $\mu$ in distribution. Denote by $F$ the distribution function associated to $\mu$.

To prove part~\emph{a)}, we first recall the well known fact that for monotone Boolean functions the total influence, for $p$ bounded away from 0 and 1, is of order at most $\sqrt{n}$. The Margulis-Russo formula then implies that there is an upper bound of order $\sqrt{n}$ on the derivative of $\Pr_p(f_n=1)$, for $p$ bounded away from 0 and 1. Now, pick $x_1<x_2$ at which $F$ is continuous and satisfies $F(x_1)\le1/3$ and $F(x_2)\ge2/3$. With $p_i=b_n+x_i/a_n$, we thus obtain, for all large $n$,
\be\label{eq:average}
\frac{a_n}{4(x_2-x_1)}\,\le\,\frac{\Pr_{p_2}(f_n=1)-\Pr_{p_1}(f_n=1)}{p_2-p_1}\,\le\,\frac{a_n}{x_2-x_1}.
\ee
Via the mean value theorem, the first inequality in~\eqref{eq:average} gives a lower bound on the derivative of $\Pr_p(f_n=1)$ for some $p\in[p_1,p_2]$, which, consequently, yields an upper bound on $a_n$ of order $\sqrt{n}$.

The proof of part~\emph{b)} is similar to that of part~\emph{a)}. Recall the well known fact from KKL~\cite{kahkallin88} and its extensions that the total influence of transitive Boolean functions grows at least of order $\log n$ when their variance is bounded away from 0 and 1. With the mean value theorem, the Margulis-Russo formula and the second inequality in~\eqref{eq:average} we thus obtain a corresponding lower bound on $a_n$, of order $\log n$.

The proofs for parts~\emph{c)},~\emph{d)} and~\emph{e)} are also similar to each other. Let, again, $x_1<x_2$ be continuity points of $F$ and set $p_i=b_n+x_i/a_n$. Using the mean value theorem we find $q_n\in[p_1,p_2]$ so that
\be\label{eq:Flimit}
\frac{F(x_2)-F(x_1)}{x_2-x_1}\,=\,\lim_{n\to\infty}\frac{\Pr_{p_2}(f_n=1)-\Pr_{p_1}(f_n=1)}{x_2-x_1}\,=\,\lim_{n\to\infty}\frac{1}{a_n}\,\frac{d}{dp}\Pr_p(f_n=1)\big|_{q_n}.
\ee
For~\emph{c)} we assume that $(a_n)_{n\ge1}$ is bounded above and that $0<F(x_1)\le F(x_2)<1$. Recall the discrete Poincar\'e inequality which states that
for any Boolean function $f:\{0,1\}^n\to\{0,1\}$
\be\label{eq:poincare}
\sum_{i=1}^n\Inf_i^p(f)\,\ge \,\Var_p(f).
\ee
Hence, a lower bound on~\eqref{eq:Flimit} of $\liminf_{n\to\infty}\Var_{q_n}(f_n)/(\sup_{n\ge1}a_n)$, which is strictly positive, is obtained via the Margulis-Russo formula. Consequently $F(x_1)<F(x_2)$, from which~\emph{c)} follows.

For transitive functions KKL~\cite{kahkallin88} and its extensions improve on the lower bound in~\eqref{eq:poincare} with a factor of order $\log n$. In this setting, assuming that $(a_n/\log n)_{n\ge1}$ is bounded above, we thus obtain a lower bound on~\eqref{eq:Flimit} of $\liminf_{n\to\infty}\Var_{q_n}(f_n)\big(\inf_{n\ge1}\log n/a_n\big)$, which is strictly positive by assumption. Again we find that $F(x_1)<F(x_2)$, which settles part~\emph{d)}. 

For~\emph{e)} we assume that $a_n/\sqrt{n}$ is bounded away from 0 and use the upper bound of order $\sqrt{n}$ on the derivative of $\Pr_p(f_n=1)$, for $p$ bounded away from 0 and 1, to obtain a uniform upper bound in~\eqref{eq:Flimit}, showing that $F$ is Lipschitz continuous. 
\end{proof}

\begin{remark}
While the above proof of part~\emph{c)} shows that no monotone Boolean function $f$ may have $\Pr_p(f=1)$ making a finite
number of ``jumps'' but otherwise remaining more or less constant, it is still possible to have
sudden jumps between which $\Pr_p(f=1)$ grows linearly. An example would be the event $A$ consisting 
of all configurations for which either $\omega_1=1$ and the proportion of 1's is at least $1/3$ 
or the proportion of 1's is at least $2/3$.
\end{remark}

As mentioned in the introduction, one easily shows that no subsequence of the probability measures giving equal weight to the points in $\Omega_m=\{\pm2^k:k=1,2,\ldots,m\}$ can be normalized in order to give a nondegenerate limit. We base the proof of Proposition~\ref{prop:NoSubseq} on this example.

\begin{proof}[Proof of Proposition~\ref{prop:NoSubseq}]
We prefer to work with continuous distributions. Let $\mu_m$ be the measure whose density function equals $\frac{1}{2m}$ for $x\in[k-\frac{1}{2},k+\frac{1}{2}]$ and $k\in\Omega_m$, and 0 otherwise; let $F_m$ denote the corresponding distribution function. Then $F_m$ is continuous and $\mu_m$ effectively has the same properties as the uniform measure on $\Omega_m$. According to Theorem~\ref{thm:newLIMITS} we may choose $a_n=n^{1/4}$, say, and monotone Boolean functions $f_{m,n}:\{0,1\}^n\to\{0,1\}$ such that $a_n(T(f_{m,n})-\frac{1}{2})$ tends to $\mu_m$ in distribution, as $n\to\infty$. Writing $F_{m,n}$ for the distribution function of $a_n(T(f_{m,n})-\frac{1}{2})$ and using that $F_m$ is continuous, we find for each $m$ an integer $n_m$ such that 
\be\label{eq:Funiform}
\sup_{x\in\R}\big|F_{m,n}(x)-F_m(x)\big|\,\le\,\frac{1}{m}\quad\text{for all }n\ge n_m.
\ee
Define $f_n:=f_{m,n}$ for $n\in[n_m,n_{m+1})$, and note that $f_n$ is a monotone function on $n$ variables.

Let $m_n:=\max\{m\in\N:n_m\le n\}$. Now, assume there are nonnegative sequences $(b_n)_{n\ge1}$ and $(c_n)_{n\ge1}$, and a nondegenerate probability measure with distribution function $F$ such that, along some subsequence, $F_{m_n,n}(c_nx+b_n)\to F(x)$ for all continuity points of $F$. Then,
$$
\big|F_{m_n}(c_nx+b_n)-F(x)\big|\,\le\,\big|F_{m_n}(c_nx+b_n)-F_{m_n,n}(c_nx+b_n)\big|+\big|F_{m_n,n}(c_nx+b_n)-F(x)\big|,
$$
which, for continuity points of $F$, would tend to zero along this subsequence, in virtue of~\eqref{eq:Funiform}. This would contradict the fact that no subsequence of $\mu_m$ can be normalized to obtain a nondegenerate limit, and therefore shows that no subsequence of $(T(f_n))_{n\ge1}$ can be normalized to obtain a nondegenerate limit.
\end{proof}

\appendix
\numberwithin{equation}{section}
\numberwithin{figure}{section}
\numberwithin{thm}{section}

\def\Spec{\mathsf{Spec}}
\def\lora{\longrightarrow}

\section{The tail of the crossing probability in near-critical percolation\\ (An appendix by G\'abor Pete)}

The goals of this appendix are to prove Theorem~\ref{thm:Perc} of the main text, to draw attention to an interesting difference between the flip times of crossing events in near-critical and dynamical percolation in the plane, and to examine, on an intuitive level, how near-critical high-dimensional percolation may behave.

\subsection{Planar percolation}

We will work with site percolation on the triangular lattice $\TG$ with mesh size 1, at density close to the critical value $p_c=1/2$. See~\cite{grimmett99,werner09} for background.  Let $\LR_\Quad$ denote the left-to-right crossing event in a nice quad $\Quad$, by which we mean the image of the square $[0,1]^2$ under a smooth injective map into $\C$. If we magnify $\Quad$ by a factor of $\rho$, the new quad will be denoted by $\rho\Quad$, the center of magnification being irrelevant. Furthermore, let $\alpha_4(n)$ denote the critical alternating four-arm probability from a given site to Euclidean distance $n$, and let $r(n):=1/\big(n^2\alpha_4(n)\big)$. The asymptotics $r(n)=n^{-3/4+o(1)}$ was proved in \cite{smiwer01}.

Consider now the canonical coupling of percolation configurations at different densities: take $U_x\sim\mathsf{Unif}[0,1]$ i.i.d.~for all vertices $x\in\TG$, and for any $\lambda\in(-\infty,\infty)$ and $n$ large enough so that $\lambda r(n)\in [-1/2,1/2]$, define $\omega_n(\lambda)$ to be the configuration where $x$ is open if{f} $U_x \leq 1/2 + \lambda r(n)$. This process  $\{\omega_n(\lambda)\}_{\lambda\in\R}$ is called the near-critical ensemble. Note that the flip time $T_n$ for $\LR_{[0,n]^2}$ considered in the main text, with rescaling $W_n:=n^2\alpha_4(n)(T_n-1/2)$, exactly satisfies $\{W_n \leq \lambda\} = \{\omega_n(\lambda) \in \LR_{[0,n]^2}\}$.

\begin{figure}[htbp]
\begin{center}
\includegraphics[width=0.4\textwidth]{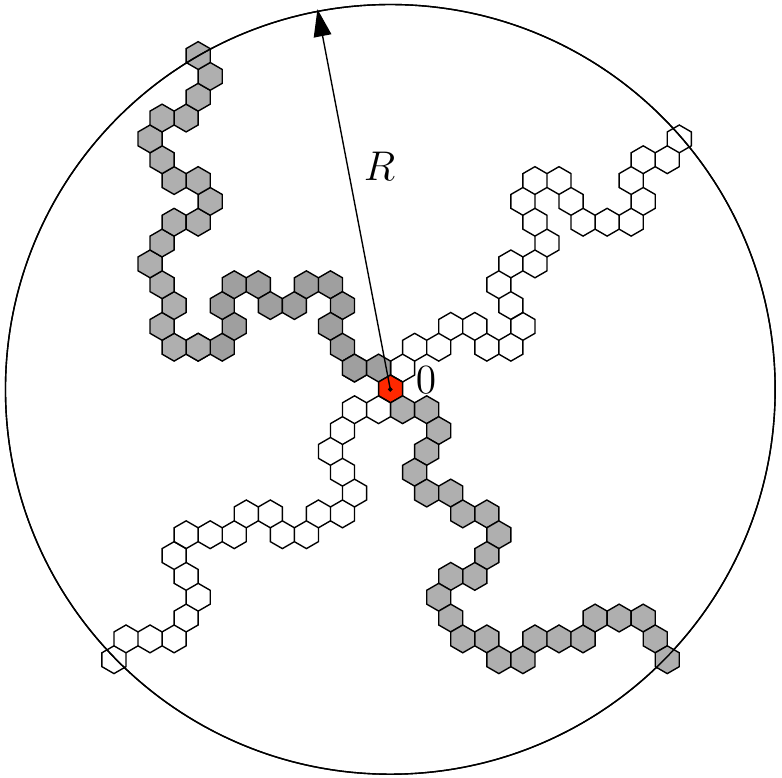}
\end{center}
\caption{A realization of the {\it four-arm} event.}
\label{f.armsevents}
\end{figure}

It is proved in~\cite{garpetsch13,garpetsch} that the process $\{\omega_n(\lambda)\}_{\lambda\in\R}$ has a scaling limit in a certain topology: for any fixed $\lambda$, the static topology encodes quad-crossings, while as a process, the Skorokhod topology of c\`adl\`ag processes is used; see \cite[Theorem 1.5]{garpetsch} for the precise statement. (The time-parametrization of $\omega_n(\lambda)$ in \cite{garpetsch} is slightly different from the above definition, but this makes no real difference.) It is also proved in \cite[Proposition 9.6]{garpetsch} that this limiting c\`adl\`ag process has no jumps at deterministic values of $\lambda$, which implies that for any $\lambda\in\R$, the limit
\begin{equation}\label{e.flamQ}
f(\lambda,\Quad):=\lim_{n\to\infty} \Pbo{p_c+\lambda r(n)}{\LR_{n\Quad}}
\end{equation}
exists for any nice quad $\Quad$ (which does not automatically follow from the existence of the limit as a process, because of the Skorokhod topology of c\`adl\`ag processes). Moreover, this limit is absolutely continuous in $\lambda$, and most importantly, it is non-trivial: it satisfies $f(\lambda,\Quad)\in (0,1)$, and 
\begin{equation}\label{e.lamlim}
\lim_{\lambda\to -\infty} f(\lambda,\Quad) = 0\,,\quad\textrm{and}\quad \lim_{\lambda\to \infty} f(\lambda,\Quad) = 1\,.
\end{equation}
In fact, these properties were already known from Kesten's work \cite{kesten87,nolin08}, for any subsequential limit, at that time. Briefly, in the entire critical window where $\Pso{p}{{\LR_{n\Quad}}} \in (\eps,1-\eps)$ holds, the expected number of pivotals is comparable to the scaling factor $n^2\alpha_4(n)$ (with constant factors depending on $\Quad$ and $\eps$), which implies absolute continuity and~(\ref{e.lamlim}) using Russo's formula.

The above results imply Part~(a) of Theorem~\ref{thm:Perc}.

For the proof of Part~(b), we will also need that the limit $f(\lambda,\Quad)$ in (\ref{e.flamQ}) is conformally covariant, proved in \cite[Theorem 10.3]{garpetsch}. Instead of defining exactly what this means, let us just give a special case that we will use:
\begin{equation}\label{e.scalecov}
f(\rho\lambda,\Quad)=f(\lambda,\rho^{4/3}\Quad)\,,
\end{equation}
for any scaling factor $\rho>0$. For simplicity, we will just take $\Quad=[0,1]^2$. Then, the limiting tail behaviour of the rescaled flip time $W_n$ is determined by the following result, proved below, together with the well-known duality $f(-\lambda,[0,1]^2)=1-f(\lambda,[0,1]^2)$:
 
\begin{thm}\label{t.tail}
As $\lambda\to\infty$, we have the superexponential decay 
$$f(-\lambda,[0,1]^2) = \exp\left(-\Theta\big(\lambda^{4/3}\big)\right),$$
where, as usually, $g(\lambda)=\Theta(h(\lambda))$ means the existence of universal constants $0<c<C<\infty$ such that $c < g(\lambda)/h(\lambda) < C$ holds for all $\lambda$ in question.
\end{thm}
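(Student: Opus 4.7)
The plan is to exploit the scaling covariance~(\ref{e.scalecov}) to reduce the large-$\lambda$ tail problem to a large-box crossing problem at a fixed subcritical offset. Taking $\rho=\lambda$ and $\Quad=[0,1]^2$ in~(\ref{e.scalecov}) gives
$$f(-\lambda,[0,1]^2)=f(-1,[0,\lambda^{4/3}]^2),$$
so with $N:=\lambda^{4/3}$ the statement becomes $f(-1,[0,N]^2)=\exp(-\Theta(N))$, i.e.\ an exponential decay in the side length of a large box at fixed subcritical scaling-limit offset. Alternatively (and, for carrying out the proof, more conveniently), one may work directly in the discrete model via~(\ref{e.flamQ}): $f(-\lambda,[0,1]^2)=\lim_{n\to\infty}\Pbo{p_c-\lambda r(n)}{\LR_{[0,n]^2}}$. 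By Kesten's scaling relation $|p-p_c|\,L(p)^2\alpha_4(L(p))\asymp 1$ from~\cite{kesten87,nolin08} together with $\alpha_4(n)=n^{-5/4+o(1)}$, the subcritical correlation length at density $p_c-\lambda r(n)$ satisfies $L(p)\asymp n\lambda^{-4/3}$. Thus the box $[0,n]^2$ contains on the order of $\lambda^{4/3}$ correlation lengths in each direction, which is precisely the exponent $4/3$ needed in the final bound.

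For the upper bound, I would apply a standard finite-size subcritical renormalization scheme. Since $f(-1,[0,1]^2)<1$ by~(\ref{e.lamlim}) and the discrete crossing probabilities converge to the continuous ones, one can choose a reference scale at which some block-crossing probability is a fixed $q<1$, tile $[0,N]^2$ by blocks at that scale, and observe that any left-right crossing of $[0,N]^2$ must traverse $\Theta(N)$ successive columns of blocks. A BK-type inequality, or direct stochastic domination by an independent block model together with subadditivity of $-\log f(-1,[0,N]^2)$ in $N$, then yields $f(-1,[0,N]^2)\le\exp(-cN)$.

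For the lower bound, I would chain together via the FKG inequality the crossings of $\Theta(N)$ overlapping subrectangles of bounded size. Since at offset $-1$ the crossing probability of each such bounded subrectangle is strictly positive (a consequence of~(\ref{e.lamlim}) together with RSW-type estimates in the near-critical regime up to the correlation-length scale, once again a consequence of~\cite{kesten87,nolin08}), the FKG product is bounded below by $\exp(-CN)$. The main obstacle is to make the renormalization and RSW steps rigorous in the scaling-limit setting: either the approximate independence between well-separated blocks must be imported from the scaling-limit construction of~\cite{garpetsch}, or the whole argument must be carried out at the discrete level \emph{uniformly in $n$} and only afterwards passed to the limit, which is where the uniformity of Kesten's correlation-length and near-critical RSW estimates across the relevant window becomes essential.
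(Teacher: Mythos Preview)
Your approach is essentially the same as the paper's: reduce via the scaling covariance~(\ref{e.scalecov}) to $f(-1,[0,\lambda^{4/3}]^2)$, then prove exponential decay in the side length at the fixed offset $-1$. The paper's execution is shorter in two respects. First, instead of redoing the renormalization upper bound and the FKG chaining lower bound, it simply quotes the ready-made two-sided estimate from \cite[Lemma~39 and~(7.28)]{nolin08},
\[
C_1 \exp\big(-C_2 N/L(p)\big) \le \Pbo{p}{\LR_{[0,N]^2}} \le C_2 \exp\big(-C_1 N/L(p)\big),
\]
valid for all $p<p_c$ and all $N$, which packages exactly the arguments you sketched. Second, because scaling covariance is applied \emph{first} in the continuum (where the exponent $4/3$ is exact), the only correlation-length input needed is the $\lambda=1$ statement $L(p_c-r(n))=\Theta(n)$; one then just takes $p=p_c-r(n)$, $N=\lambda^{4/3}n$, and lets $n\to\infty$. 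This sidesteps your ``alternative'' of computing $L(p_c-\lambda r(n))$ for general $\lambda$, where the $o(1)$ in $\alpha_4(n)=n^{-5/4+o(1)}$ would force you to worry about uniformity in $\lambda$, and it also dissolves your concern about carrying out renormalization in the scaling limit: the discrete bounds are already uniform in $N$, so the passage to the limit is immediate.
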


Besides the question raised in the main text, another motivation for Theorem~\ref{t.tail} is \cite{hammospet12}, where the analogous tail behaviour was studied for the scaling limit of dynamical percolation. Namely, if we start with critical percolation, then resample each site at rate $r(n)$, keeping the configuration stationary, then we may look at 
\begin{equation}\label{e.gtQ}
g(t,\Quad):=\lim_{n\to\infty} \Pb{\LR_{n\Quad}\text{ does not hold at any moment in }[0,t]}\,.
\end{equation}
Again, this limit exists and is conformally covariant by \cite{garpetsch13,garpetsch}. Then, regarding the tail behaviour, it was proved in \cite{hammospet12} using general Markov chain arguments such as spectral computations and a dynamical (space-time) FKG-inequality, that there exists an absolute constant $c>0$, and for every $K>0$ some $c_K>0$, such that 
\begin{equation}\label{e.dynexit}
\exp(-c\, t) \leq g(t,[0,1]^2) \leq c_K t^{-K}\,,
\end{equation}
for all $t\geq 1$. Furthermore, the present author was speculating in \cite{pete-talk}, using non-rigorous renormalization ideas (motivated by \cite{lanlaf94,langlands05,schsmi11}) and a very strong universality hypothesis, that the true behaviour could be $\exp(-t^{2/3+o(1)})$. Several people in the community agreed that this speculation looked quite solid (even if non-rigorous) as a lower bound, while more questionable as an upper bound. And, as typical for these planar percolation scaling limits, that argument seemed to be working equally well for the symmetric (dynamical) and asymmetric (near-critical) versions. However, our present Theorem~\ref{t.tail} violates not only this bold prediction for the near-critical case, but it also shows that $f(t,\Quad)$ does not satisfy the rigorous exponential lower bound of~(\ref{e.dynexit}) for $g(t,\Quad)$, hence this tail probability question turns out to be an instance where the asymmetric versus symmetric dynamical versions of critical percolation show drastically different behaviour. Regarding the true decay in the symmetric dynamical version, our simulations suggest a subexponential decay, but are far from being conclusive, and are even further from giving a prediction for the exponent. See Figure~\ref{f.simu}.

\begin{figure}[htbp]
\centerline{
\AffixLabels{
\includegraphics[height=0.32\textwidth]{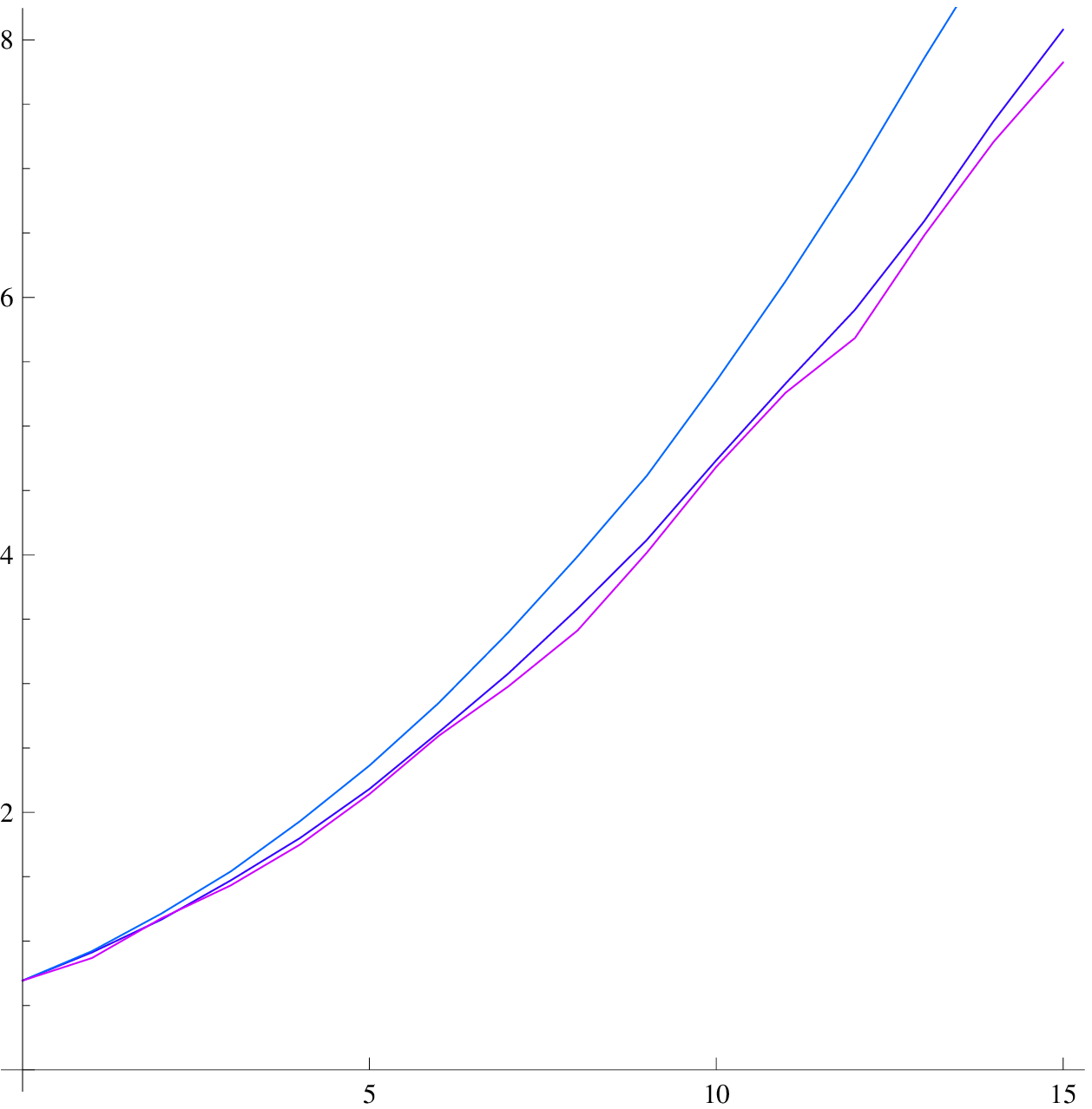}
\hskip 1 cm
\includegraphics[height=0.32\textwidth]{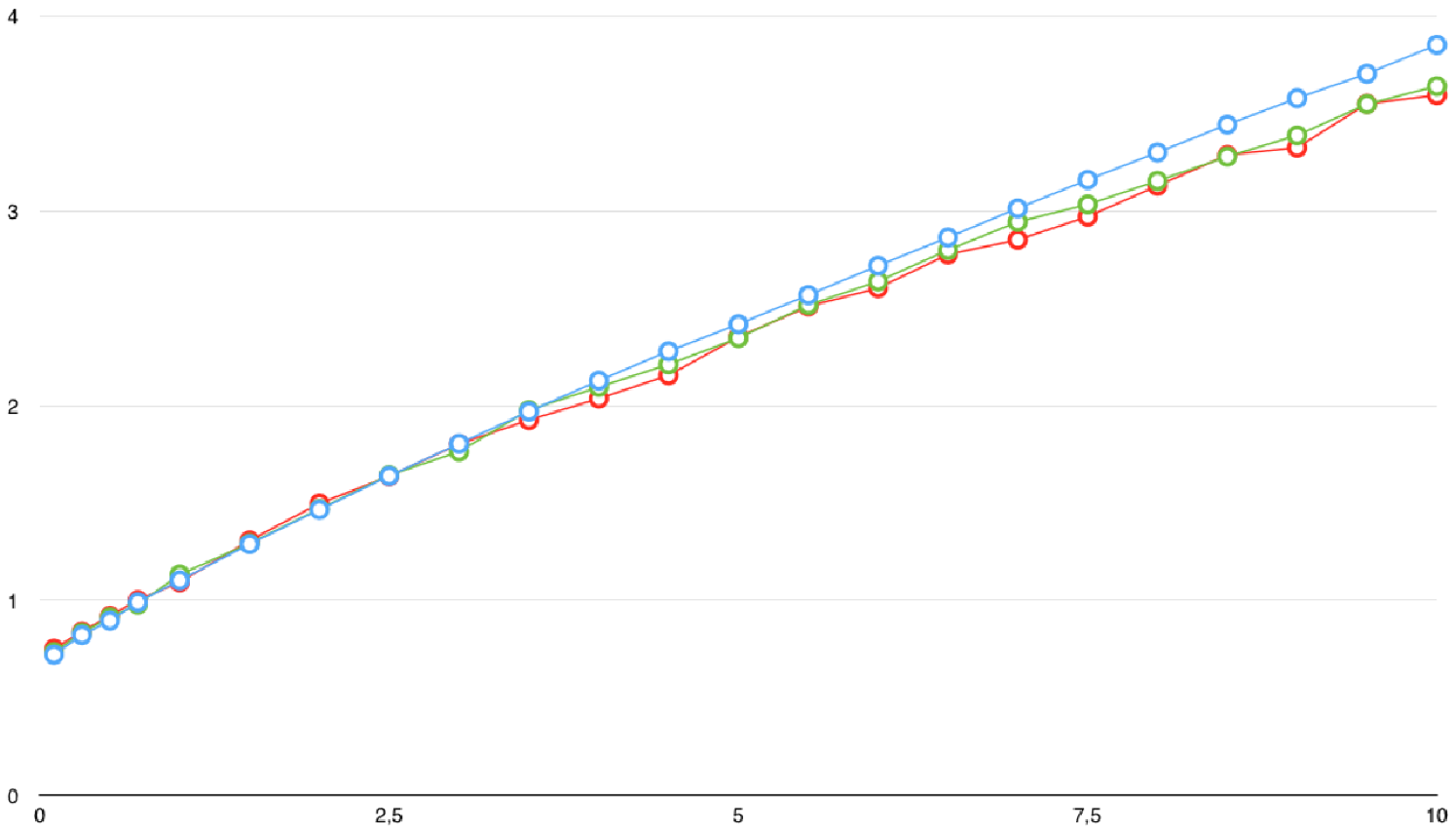}
}}
\caption{On the left, simulation results are shown for $-\log f(\lambda, [0,1]^2)$, with the near-critical percolation parameter varying from $\lambda=0$ to $1.5$, board sizes $n=10,100,500$. On the right, simulation results are shown for $-\log g(t, [0,1]^2)$ in dynamical percolation for scaled time going from $t=0$ to $10$, on board sizes $n=10,100,200$. In both cases, the values are lower and have more fluctuations as $n$ increases, since fewer simulation runs were feasible. The superexponential decay for $f(\lambda, [0,1]^2)$ is apparent, the subexponential decay for $g(t, [0,1]^2)$ is less so.}
\label{f.simu}
\end{figure}

The proof of Theorem~\ref{t.tail} is very simple, given the results of \cite{garpetsch13,garpetsch} cited above, and standard percolation techniques proving exponential decay for certain connection probabilities, as can be found in \cite[Section 7]{nolin08}. This is somewhat similar to \cite{dumcop13}, where Duminil-Copin showed, again building on \cite{garpetsch13,garpetsch}, that the super-critical percolation Wulff shape is asymptotically circular, as the density $p$ approaches $p_c$.

\begin{proof}[Proof of Theorem~\ref{t.tail}]
By the scaling covariance~(\ref{e.scalecov}), we need to show that 
\begin{equation}\label{e.lam43}
f\big(-1,[0,\lambda^{4/3}]^2\big) = \exp\left(-\Theta\big(\lambda^{4/3}\big)\right)\,,
\end{equation}
as $\lambda\to\infty$. 

By \cite[Lemma 39 and (7.28)]{nolin08}, there exist $0<C_1\leq C_2<\infty$ such that for all $N$ and $p < p_c$,
\begin{equation}\label{e.long}
C_1 \exp \big(-C_2N/L(p)\big) \leq \Pbo{p}{\LR_{[0,N]^2} } \leq C_2 \exp \big(-C_1N/L(p)\big),
\end{equation}
where $L(p)$ is a correlation length; say,
\begin{equation}\label{e.corrlen}
L(p):=\inf\left\{n: \Pbo{p}{\LR_{[0,n]^2}} < 1/100 \right\}.
\end{equation}
Let us remark that~(\ref{e.long}) is nothing mysterious: the main reason for it is that $1/100$ is small enough so that a variation of the Peierls contour argument works on a renormalized lattice with mesh $L(p)$.

On the other hand, it is well-known (see, e.g., Proposition 34 in  \cite{nolin08}) that we have $L(p_c - r(n)) = \Theta(n)$, with the constant factors in $\Theta$ depending, of course, on $1/100$ in definition~(\ref{e.corrlen}). Now take $p=p_c-r(n)$ and $N =\lambda^{4/3}n$ in~(\ref{e.long}), then send $n\to\infty$ to see that~(\ref{e.lam43}) holds.
\end{proof}

\subsection{What about high dimensions?}

It is proved in Theorem~\ref{thm:newLIMITS} of the main text that for any probability distribution function $F(\cdot)$ there exists a sequence of monotone Boolean functions $\{f_n\}$ and some parameters $p_n,b_n$ such that $\Pbo{p_n+\lambda b_n}{f_n=1}\to F(\lambda)$ as $n\to\infty$, for all $\lambda\in\R$. However, all ``natural'' examples of limit distributions $F(\cdot)$ found so far have exponential or superexponential tails. The exponent $4/3$ in the superexponential decay $\exp(-|\lambda|^{4/3})$ of the previous section appears to be a rather direct consequence of the planar correlation length exponent being $\nu=4/3$ (i.e., the correlation length (\ref{e.corrlen}) satisfies $L(p)=|p-p_c|^{-4/3+o(1)}$ as $p\nearrow p_c$) and of the critical window being $r(n)=n^{-3/4+o(1)}$. Therefore, one might hope (as the present author did in the first version of this appendix) that crossing events in near-critical percolation on $\Z^d$, where $d$ is high enough so that already mean-field behaviour takes place, with $\nu=1/2 < 1$, could have subexponential tails. However, as we will briefly explain, this turns out to be very naive, and we expect now that the lower tail in high dimension is exponential in $|\lambda|$, while the upper tail is doubly exponential, similarly to some of the examples in Section~\ref{s.majority} of the main text.

By high-dimensional percolation we will mean $d$ high enough so that the critical two-point connectivity function already scales like Green's function for simple random walk:
\begin{equation}\label{e.Green}
\Pbo{p_c}{\underline x\llra \underline y}=\Theta\big(\|\underline x-\underline y\|^{2-d} \big).
\end{equation}
This is conjectured to be the case for $d>6$, proved for $d\geq 19$ in \cite{HaraSlade90}, and recently for $d\geq 11$ in \cite{FivdH}. From now on, we will always assume that this mean-field behavior holds. Two-point connectivity can also be used to define a near-critical correlation length  $\xi(p)$:
$$
\Pbo{p}{\underline x\llra \underline y\hskip 0.2 cm \not\hskip -0.2 cm \llra\infty}=\exp\Big(-\Theta\big(\|\underline x-\underline y\| / \xi(p)\big)\Big)\,,
$$
which makes sense both for $p<p_c$ and $p>p_c$. In the planar case, it is known that $\xi(p)\asymp L(p)$, with $L(p)$ defined in  (\ref{e.corrlen}); see \cite[Theorem 33]{nolin08}. The mean-field value of the exponent in $\xi(p)=|p-p_c|^{-\nu+o(1)}$ is $\nu=1/2$, proved for $p<p_c$ in \cite{hara90}. In analogy with the 2-dimensional case, this may suggest that the critical window for left-to-right crossing in a box of side-length $n$ is of size $n^{-2+o(1)}$. However, there are some huge differences between planar and high-dimensional percolation, hence we have to examine this more closely. 

The heuristic picture of high-dimensional percolation, closely related to~(\ref{e.Green}), is that the critical cluster is like a critical Galton-Watson tree, embedded into $\Z^d$ as a branching random walk. A precise formulation of this is that the scaling limit of the Incipient Infinite Cluster is Integrated Infinite Canonical Super-Brownian Motion. This has been proved only very partially \cite{HaraSladeISBE,vdhofjar04,HHHM}; see \cite{SladeAMS} for a nice introduction into the limit object, and \cite{heyvdhof} for a survey. Therefore, in order to understand near-critical high-dimensional percolation, one could first look at near-critical Galton-Watson trees. One can show that, for any offspring distribution with mean $1-\lambda\eps$ and finite variance,
\begin{equation}\label{e.GWtree}
\Pbo{\mathsf{GW}(1-\lambda\eps)}{o \llra 1/\eps} =  \Theta(\eps) \,  \exp\big(-\Theta(\lambda)\big),\qquad 0 \leq \lambda < 1/\eps\,,
\end{equation}
where $\{o \llra 1/\eps\}$ denotes the event that the tree reaches generation $1/\eps$. 
There are several ways to prove this estimate; one is to consider a depth-first type exploration process of the tree, and use martingale calculations for the resulting integer-valued random walk, as in \cite{NachPer}. Now, accepting the heuristic picture above, and using that a random walk path in $\Z^d$ with $1/\eps$ steps goes to a Euclidean distance about $\sqrt{1/\eps}$ with large probability, the estimate~(\ref{e.GWtree}) suggests that, for near-critical percolation on $\Z^d$,
\begin{equation}\label{e.GWperc}
\PBo{p_c-\lambda\eps}{o \llra \partial B_{\sqrt{1/\eps}}(o)} =  \Theta(\eps) \,  \exp\big(-\Theta(\lambda)\big),\qquad 0 \leq \lambda < p_c/\eps\,,
\end{equation}
where $B_r(o)$ denotes the ball of Euclidean radius $r$. Note here that, in principle, the cluster could reach to a large distance not only by the embedded critical tree having a large radius, but also by the embedding reaching unusually far. However, changing the speed exponent $1/2$ has a probability cost that is exponential in the number of steps, $1/\eps$, hence this strategy cannot improve on~(\ref{e.GWperc}).

This was just heuristics, and in fact, only the critical case $\lambda=0$ of~(\ref{e.GWperc}) has been proved \cite{KozNachArms}: that is, the 1-arm exponent is 2. One approach to build an actual proof could be that the backbone of high-dimensional IIC is known to have a linear ``chain of sausages'' structure, with a linear number of pivotal edges, without too long sausages between them \cite{vdhofjar04,KozNachAO,HHHM}, hence the IIC must have a tree-like structure; also, its embedding into $\Z^d$ is known to have 4-dimensional features \cite{vdhofjar04,hhhRW}; these support the view that it is a large critical tree embedded as a branching random walk. Then, lowering the percolation density by $\lambda\eps$, the probability of keeping the connection, which exists at criticality with probability $\Theta(\eps)$, is at most the probability of not closing any of the pivotals, and should in fact be comparable to that: about $(1-\lambda\eps)^{\Theta(1/\eps)} \asymp \exp\big(-\Theta(\lambda)\big)$. This yields~(\ref{e.GWperc}).

We would like to use~(\ref{e.GWperc}) to understand the critical window of left-right crossing in a large box~$[n]^d$. One key difference from the planar case, observed in \cite{aizenman97}, is that at the critical density $p_c(\Z^d)$, with large probability there is already a large number of disjoint crossings. More precisely, using the critical two-point function~(\ref{e.Green}), we get that the expected number of vertices $y$ on the right side of $[n]^d$ connected to a given vertex $x$ on the left is on the order of $n^{2-d} n^{d-1}=n$. Alternatively, by the $\lambda=0$ case of~(\ref{e.GWperc}), the probability of $x$ being connected to the right side is about $n^{-2}$, and conditioned on this event, the cluster should look like a conditioned critical branching random walk, having about $n^4$ vertices within Euclidean distance $n$, hence about $n^3$ vertices on the right side, altogether giving an expectation about $n^{-2} n^3 = n$. Furthermore, the expected number of disjoint clusters connecting the left and right sides is about $n^{d-1} n / n^{2\cdot 3} = n^{d-6}$, since there are $n^{d-1} n$ pairs of vertices $x$ and $y$ that are connected to each other, with each cluster having $n^3$ possible $x$'s and $n^3$ possible $y$'s. Indeed, it was proved in \cite{aizenman97} that with probability tending to 1, there are at least order $n^{d-6}$ disjoint connections, with the possibility of having some more spanning clusters that are thinner than $4$-dimensional.

In particular, $p_c(\Z^d)$ is already outside of the critical window for left-to-right crossing. When, in order to find the critical window,  we start decreasing $p$ from $p_c(\Z^d)$, by~(\ref{e.GWperc}), the expected number of disjoint crossings should be about
\begin{equation}\label{e.GWpercExpect}
\EBo{p_c-\frac{\lambda}{n^2}}{\#\big\{\textrm{disjoint crossings from left to right in }[n]^d\big\}} =  \Theta(n^{d-6}) \,  \exp\big(-\Theta(\lambda)\big)\,;
\end{equation}
the effect of the slight subcriticality on the size of the conditioned tree, and hence on the factor $n^{d-6}$ should be only polynomial in $\lambda$, which is negligible compared to $\exp(-\lambda)$. To actually prove~(\ref{e.GWpercExpect}) or something a little weaker, instead of using~(\ref{e.GWperc}), it might be easier to start directly from Aizenman's proof \cite{aizenman97}.

Now, the finite size critical density $p_c(n)$, where the probability of having a left-to-right crossing is exactly $1/2$, should be around a value $p_c - \lambda / n^2$ where the above expectation~(\ref{e.GWpercExpect}) is about 1, and the critical window should be where the expectation is independent of $n$. This gives the following:
\begin{conj}
For left-to-right crossing in percolation on $[n]^d$, $d>6$, the critical density is
$$
p_c(n)= p_c(\Z^d) - (c_d + o(1)) \frac{\log n}{n^2} \,,
$$
with $c_d>0$. The critical window is
$$
p = p_c(n) + \frac{\lambda}{n^2},\qquad \lambda\in(-\infty,\infty)\,.
$$ 
In this window, the number of disjoint crossings should be approximately Poisson, with mean $\exp(\Theta(\lambda))$. Hence, for a very negative $\lambda$, the probability of a left-to-right crossing should be $\exp(-\Theta(|\lambda|))$, while, for a large positive $\lambda$, the probability of having no left-to-right crossing should be $\exp\big(-\exp(\Theta(\lambda))\big)$.
\end{conj}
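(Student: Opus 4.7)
The plan is to make rigorous the near-critical branching random walk heuristic already laid out in the text, and then apply a Poisson approximation to the number of disjoint left-right crossings of $[n]^d$. The fundamental ingredient is the near-critical one-arm estimate~(\ref{e.GWperc}); once that is in hand, the critical window width, the location of $p_c(n)$, the Poisson mean, and the two tail asymptotics all follow by reasonably standard computations.

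First I would prove~(\ref{e.GWperc}). The critical case $\lambda=0$ is known from~\cite{KozNachArms}. For $\lambda>0$ and $p=p_c-\lambda\eps$, I would use the ``chain of sausages'' backbone structure of the IIC developed in~\cite{vdhofjar04, KozNachAO, HHHM}: a connection from $o$ to $\partial B_{\sqrt{1/\eps}}(o)$ typically traverses $\Theta(1/\eps)$ pivotal edges. The probability that all of them remain open at density $p_c-\lambda\eps$ is $(1-\lambda\eps)^{\Theta(1/\eps)}\asymp \exp(-\Theta(\lambda))$, which combined with the critical arm probability $\Theta(\eps)$ yields the upper bound in~(\ref{e.GWperc}). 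The matching lower bound should come from a conditional second moment argument at criticality, showing that the number of pivotals on the arm is concentrated enough that the survival probability after perturbation is at least $\exp(-\Theta(\lambda))$ uniformly on the arm event.

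Next I would transfer this into the expected-crossings estimate~(\ref{e.GWpercExpect}). Following Aizenman~\cite{aizenman97}, one sums the critical two-point function~(\ref{e.Green}) times the subcritical survival cost $\exp(-\Theta(\lambda))$ over the $\Theta(n^{2(d-1)})$ face-to-face pairs in $[n]^d$, and divides by the $\Theta(n^{6})$ such pairs contributed by each spanning cluster, to obtain an expected number of disjoint crossings $\Theta(n^{d-6})\exp(-\Theta(\lambda))$ at density $p_c(\Z^d)-\lambda/n^2$. Setting this equal to a constant locates $p_c(n) = p_c(\Z^d)-(c_d+o(1))\log n/n^2$ and the window width $1/n^2$; reparametrizing to $p=p_c(n)+\lambda/n^2$ turns the mean into $\exp(\Theta(\lambda))$. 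A Stein--Chen or second moment calculation using tree-graph bounds and the rapid mean-field decoupling available for $d>6$ would then upgrade this to Poisson convergence with parameter $\exp(\Theta(\lambda))$, from which both tails drop out: $\Pr(\text{no crossing}) = \exp(-\exp(\Theta(\lambda)))$ for $\lambda\gg 0$ and $\Pr(\text{some crossing}) \sim \exp(-\Theta(|\lambda|))$ for $\lambda\ll 0$.

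The main obstacle, I expect, is the lower bound in~(\ref{e.GWperc}) uniformly in $\lambda$. The chain-of-sausages description is essentially a statement at $p=p_c$; extending it into a quantitative \emph{near}-critical theory requires strong joint control of the backbone, the pivotals along it, and the off-backbone sausages, well beyond what current super-process scaling-limit results provide. A secondary obstacle is the Poisson decoupling step: in the critical window the near-critical correlation length $\xi(p)\sim|p-p_c|^{-1/2}$ is exactly of order $n$, precisely the box size, so nominally ``disjoint'' spanning clusters have a nontrivial interaction range, and the standard BK/tree-graph bounds likely need to be sharpened to control the second moment.
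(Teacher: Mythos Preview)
The statement you are attempting to prove is labeled a \emph{Conjecture} in the paper, and the paper does not provide a proof. The surrounding text is explicitly heuristic: the authors say ``should be'' throughout, note that only the $\lambda=0$ case of~(\ref{e.GWperc}) has been proved \cite{KozNachArms}, and write that ``to actually prove~(\ref{e.GWpercExpect}) or something a little weaker, \dots it might be easier to start directly from Aizenman's proof''. So there is no paper proof to compare against; your proposal is an attempt to rigorize what the paper presents only as a sequence of plausibility arguments.

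That said, your outline tracks the paper's heuristic essentially step for step: the near-critical arm estimate~(\ref{e.GWperc}) via the chain-of-sausages backbone picture, the Aizenman counting to get~(\ref{e.GWpercExpect}), the reparametrization locating $p_c(n)$ and the window, and a Poisson approximation for the number of disjoint crossings. You also correctly flag the two genuine obstructions that keep this a conjecture. First, the lower bound in~(\ref{e.GWperc}) for $\lambda>0$ requires quantitative near-critical control of the IIC backbone and its pivotals, which the cited works \cite{vdhofjar04,KozNachAO,HHHM} establish only at $p_c$; extending this uniformly in $\lambda$ is open. Second, the Poisson decoupling for disjoint spanning clusters sits exactly at the correlation length scale $\xi(p)\asymp n$, so the usual BK/tree-graph machinery does not obviously separate the clusters. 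These are not gaps in your write-up so much as the reasons the statement is a conjecture; your proposal should be read as a research programme rather than a proof, and on that reading it is aligned with the paper's own discussion.
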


This location of the window is also conjectured, independently, by Gady Kozma \cite{Gady}, while a Gumbel limit distribution is confirmed by computer simulations of Eren Metin El\c{c}i \cite{Eren}.

We also note that an upper bound $O(n^{-2/3})$ on the width of the critical window can in fact be proved via the following argument using randomized algorithms, partly due to Gady Kozma. 

Let $f_n: \{-1,1\}^{[n]^d} \lora \{-1,1\}$ be the $\pm1$-valued indicator function of the left-to-right crossing event in $[n]^d$, at a density $p$ in the near-critical $\eps$-window: $\Pso{p}{f_n=1}\in (\eps,1-\eps)$, for some fixed $\eps>0$. Note that, for $n$ large enough, we have $1/(2d) < p < p_c(\Z^d)$. 

Consider now the following randomized version of the algorithm used in \cite[Section 4]{BKSch} to determine the value of $f_n$. Choose uniformly at random a coordinate hyperplane separating the left and right faces. Now explore the clusters of all the sites in this hyperplane to determine whether there is a crossing. This way, each bit (site or edge) of the box is explored with probability at most $O(n^{-2/3})$: either the bit is within distance $n^{1/3}$ of the chosen hyperplane, which has probability $n^{-2/3}$, or it will be queried only if in a cluster of radius at least $n^{1/3}$, which has probability $O(n^{-2/3})$, since the critical one-arm exponent is 2, and we are at density $p < p_c(\Z^d)$.

In the wording of \cite{SchSt}, the above algorithm has revealment $O(n^{-2/3})$: the probability $\delta_i$ of revealing any bit $i$ is $O(n^{-2/3})$. By the main inequality of \cite{OSSS}, see also \cite[Theorem XII.36]{garste}, for the total influence we get
\begin{equation}\label{e.OSSS}
\Var_p(f_n) \leq 4p(1-p) \sum_{i\in [n]^d}  \delta_i \, I^p_i (f_n) \leq O(n^{-2/3})\, I^p(f_n)\,.
\end{equation}
Being in the $\eps$-window, we have $4\eps(1-\eps) \leq \Var_p(f_n)$, hence~(\ref{e.OSSS}) gives $c(\eps) \, n^{2/3} \leq I^p(f_n)$, for some $c(\eps) > 0$. By Russo's formula, this implies that the near-critical $\eps$-window for $f_n$ has width at most $C(\eps) \, n^{-2/3}$, as we claimed.

Let us mention that another way to do this argument, yielding a worse exponent, would have been via noise sensitivity and Fourier analysis; for background, see \cite{garste}. By \cite[Theorem 1.8]{SchSt} (or more precisely, by its straightforward extension from density $p=1/2$ to general $p$ values bounded away from 0 and 1), the revealment $O(n^{-2/3})$ implies that $f_n$ is sensitive to any noise $\gg n^{-1/3}$. In terms of the Fourier spectral sample $\Spec^p(f_n)$, defined by
$$
\Pb{\Spec^p(f_n)=S} := \frac{\widehat{f_n}(S)^2}{\|f_n\|_2^2}=\widehat{f_n}(S)^2\,,\qquad S\subset [n]^d\,,
$$
where $\widehat{f}(S):=\Ebo{p}{f\, \chi^p_{S}}$ is the Fourier coefficient corresponding to the orthonormal basis
$$
\chi^p_S(\omega) := \prod_{i\in S} \omega_i \left(\frac{1-p}{p}\right)^{\omega_i/2} ,\qquad \omega\in \{-1,1\}^{[n]^d}
$$
for $L^2\big(\{-1,1\}^{[n]^d},\E_p\big)$, this noise sensitivity can be written as follows: for any $\eps$-window and any $\delta>0$, if $\kappa>0$ is small enough, then
\begin{equation}\label{e.Spec}
\Pb{0 < |\Spec^p(f_n)| < \kappa \, n^{1/3}} < \delta\,.
\end{equation}
On the other hand, $\Pb{\Spec^p(f_n)=\emptyset}=\Ebo{p}{f_n}^2 \leq (1-2\eps)^2$, since we are in the $\eps$-window.
Combined with~(\ref{e.Spec}), for $\delta$ sufficiently small, we get that the total influence, for $p$ in the $\eps$-window, satisfies
$$
I^p(f_n) = \Eb{|\Spec^p(f_n)|} \geq  c(\eps) \, n^{1/3},\qquad  c(\eps) > 0\,,
$$
giving that the width of the $\eps$-window is at most $C(\eps)\, n^{-1/3}$.
\bigskip

Finally, one might prefer to deal with transitive Boolean functions only, hence would want to consider high-dimensional percolation on tori.
 Another observation of \cite{aizenman97} was that finite size boundary effects become important here, and the cluster structure in a torus is different from the cluster structure in a box. In particular, for percolation on the torus, already Erd\H{o}s-R\'enyi random graph asymptotics take place: the largest critical cluster has size of order $n^{2d/3}$, and the critical window should be $n^{-d/3}$. (Nevertheless, large clusters are still ``four-dimensional'', similarly to the box case; in particular, their Euclidean diameter, when pulled back to the universal cover of the torus, is $n^{d/6}$.) A large part of this conjectured basic near-critical picture has already been proved; see \cite{borchavdhofslaspe05a,borchavdhofslaspe05b,heyvdhof07,heyvdhof11,vdhofsap14}.
 
The above-mentioned critical window $n^{-d/3}$ should hold for most natural monotone events on the torus: e.g., for the existence of a non-contractible cluster, or a cluster of pulled-back Euclidean diameter $n^{d/6}$, or a cluster of size $n^{2d/3}$. However, the near-critical tails should be different. At the lower end of the window, with very negative $\lambda$, to get a cluster with large Euclidean diameter $n^{d/6}$ (independent of $\lambda$), the near-critical GW tree diameter asymptotics~(\ref{e.GWtree}) should be relevant, yielding a tail $\exp(-\Theta(|\lambda|))$. To get a cluster with volume $n^{2d/3}$, the near-critical GW tree volume asymptotics should be relevant, which can be shown to be $\exp(-\Theta(|\lambda|^2))$. A similar difference for the Erd\H{o}s-R\'enyi random graphs was pointed out in \cite{Luczak}. Regarding the tail for the existence of a non-contractible cluster, we will not make a guess. But in any case, it seems unlikely that natural monotone events will have subexponential tails.

For a recent survey of high-dimensional percolation and random graphs, see \cite{heyvdhof}.

\section*{Acknowledgements}
This work was begun while DA was visiting the Department for Mathematical Sciences, Gothenburg, 
to which he is grateful for their hospitality.
The main authors thank Anders Martinsson for jointly proving part~\emph{b)} of Theorem~\ref{thm:newLIMITS} and for other useful comments. They further thank Bo Berndtsson, Christophe Garban and Benjy Weiss
for some discussions, and an anonymous referee for a number of comments on the manuscript.

The appendix author is grateful to Bal\'azs R\'ath for an eye-opening discussion and comments on the manuscript, and to the referee for pointing out that the proof of~(\ref{e.lam43}) can basically be found already in \cite{nolin08}. GP is also grateful to Christophe Garban and Alan Hammond for many discussions on near-critical and dynamical percolation, and to Remco van der Hofstad, Tim Hulshof, and Gady Kozma for conversations about the high-dimensional case.

DA was during this work supported by postdoctoral grants
150804/2012-1 from the Brazilian CNPq and 637-2013-7302 from the Swedish Research Council.
JES also acknowledges the support of the Swedish Research Council and the Knut and Alice Wallenberg Foundation.
GP was partially supported by the Hungarian National Science Fund OTKA grant K109684 and by the MTA R\'enyi Institute ``Lend\"ulet'' Limits of Structures Research Group.

\newcommand{\noopsort}[1]{}\def\cprime{$'$}

\medskip

{\small
\noindent
{\sc Daniel Ahlberg\\
Instituto Nacional de Matem\'atica Pura e Aplicada\\
Estrada Dona Castorina 110, 22460-320 Rio de Janeiro, Brasil\\
Department of Mathematics, Uppsala University\\
SE-75106 Uppsala, Sweden}\\
\url{www.impa.br/~ahlberg}\\
\texttt{ahlberg@impa.br}\\

\noindent
{\sc Jeffrey E.\ Steif\\
Mathematical Sciences, Chalmers University of Technology\\
Mathematical Sciences, University of Gothenburg\\
SE-41296 Gothenburg, Sweden}\\
\url{http://www.math.chalmers.se/~steif}\\
\texttt{steif@chalmers.se}\\

\noindent
{\sc G\'abor Pete\\
R\'enyi Institute, Hungarian Academy of Sciences, Budapest\\
13-15 Reáltanoda u., 1053 Budapest, Hungary\\
Institute of Mathematics, Budapest University of Technology and Economics\\
1 Egry J\'ozsef u., 1111 Budapest, Hungary}\\
\url{http://www.math.bme.hu/~gabor}\\
\texttt{gabor@math.bme.hu}
}

\end{document}